\documentclass{article}
\usepackage{graphicx} 

\usepackage{a4wide}
\usepackage[utf8]{inputenc}
\usepackage[a4paper, margin=2.3cm]{geometry}
\usepackage{amsmath,amssymb,amsthm}
\usepackage{color}
\usepackage{xcolor}
\usepackage{parskip}
\usepackage{hyperref}
\usepackage{parskip}
\usepackage{setspace}
\usepackage{enumitem}
\usepackage{comment}
\usepackage{float} 

\usepackage{geometry}
\usepackage{booktabs}
\geometry{margin=1in}

\newtheorem{theorem}{Theorem}[section]
\newtheorem{remark}{Remark}[section]
\newtheorem{corollary}[theorem]{Corollary}

\newtheorem{lemma}[theorem]{Lemma}

\newtheorem{proposition}[theorem]{Proposition}
\newtheorem{definition}[theorem]{Definition}

\newtheorem*{definition*}{Definition}

\title{A sharp point-sphere incidence bound for $(u, s)$-Salem sets}

\author{
Steven Senger\thanks{Department of Mathematics, Missouri State University. Email: StevenSenger@MissouriState.edu}\and Dung The Tran\thanks{VNU University of Science. ~~~~~~~~~~~~~~~~~~~~~~~~~~~~~~~~~~~~ Email: tranthedung56@gmail.com }
  }
\date{}

\begin{document}

\maketitle
\begin{abstract}
We establish a sharp point-sphere incidence bound in finite fields for point sets exhibiting controlled additive structure.
Working in the framework of \((4,s)\)-Salem sets, which quantify pseudorandomness via fourth-order additive energy, we prove that if
\(P\subset \mathbb{F}_q^d\) is a \((4,s)\)-Salem set with \(s\in \big( \frac{1}{4}, \frac{1}{2} \big]\) and \(|P|\ll q^{ \frac{d}{4s}}\), then for any finite family \(S\) of spheres in \(\mathbb{F}_q^d\),
\[
\bigg| I(P,S)-\frac{|P||S| }{q} \bigg| \ll q^{\frac{d}{4}}\,|P|^{1-s}\,|S|^{\frac{3}{4}}.
\]
This estimate improves the classical point-sphere incidence bounds for arbitrary point sets across a broad parameter range. The proof combines additive energy estimates with a lifting argument that converts point-sphere incidences into point-hyperplane incidences in one higher dimension while preserving the \((4,s)\)-Salem property. As applications, we derive refined bounds for unit distances and sum-product type phenomena, and we extend the method to \((u,s)\)-Salem sets for even moments \(u\ge4\).
\end{abstract}

\textbf{Keywords:} $(4, s)$-Salem sets, point-sphere incidences, additive energy

\textbf{MSC Classification}: 52C10, 51A20, 05B25

\tableofcontents

\section{Introduction}
Throughout the paper, for a set $A\subset\mathbb{F}_q^d,$ we write its indicator function $1_A(x)$ as simply $A(x)$ and the notation $X \ll Y$ (equivalently, $X = O(Y)$)
means that there exists an absolute constant $C>0$ such that $X \leq C Y$. Also, in what follows, we may use $0$ to denote the appropriate zero vector. We begin by giving a definition of the Fourier transform that will serve our purposes.

\begin{definition}[Fourier transform and norms]
Let $A\subset\mathbb{F}_q^d$. The Fourier transform of the indicator function of $A$ (often just called the Fourier transform of $A$ when context is clear) is defined by
\[
\widehat{A}(x)
:= q^{-d}\sum_{y\in A} \chi(-x\cdot y),
\qquad x\in\mathbb{F}_q^d,
\]
where $\chi(\cdot)$ is a fixed nontrivial additive character on $\mathbb{F}_q$.
\end{definition}

For $u\in[1,\infty]$, we define the normalized $L^u$-norm of the Fourier transform of $A$ by
\[
\|\widehat{A}\|_u :=\left(\frac{1}{q^d}\sum\limits_{x\in\mathbb{F}_q^d\setminus\{0\}}
|\widehat{A}(x)|^u\right)^{\frac{1}{u}},
\]
when $u\in[1,\infty),$ and
\[
\|\widehat{A}\|_u :=\sup\limits_{x\in\mathbb{F}_q^d\setminus\{0\}}
|\widehat{A}(x)|,
\]
when $u=\infty.$

In \cite{iosevich-Rudnev-07}, Iosevich and Rudnev defined \emph{Salem sets} in finite fields to be those sets $A\subset \mathbb F_q^d$ satisfying the optimal bound on Fourier decay:
\[
\max_{m\neq 0}|\widehat{A}(m)|
\;\le\; q^{-d}|A|^{\frac{1}{2}}.
\]
The restriction that $m\neq 0$ is because we trivially have that $\widehat{A}(0) = q^{-d}|A|$ regardless of the structure of $A,$ so it provides no relevant information. In particular, they showed that these sets satisfy expected behaviors with respect to the distance problem they explore in \cite{iosevich-Rudnev-07}. More recently, Fraser \cite{fraser2} introduced a relaxed notion of these called $(u,s)$-Salem sets.


\begin{definition}\label{def:us-salem}
Let $u\in[1,\infty]$ and $s\in[0,1]$. A finite set $A\subset\mathbb{F}_q^d$
is called a \emph{$(u,s)$-Salem set} if
\begin{equation}\label{Salem-set-inequality}
\|\widehat{A}\|_u \ll q^{-d}|A|^{1-s},
\end{equation}
where the implied multiplicative constant is independent of our choice of $q$ and $A$.
\end{definition}

The $(u,s)$-Salem condition admits an equivalent formulation in terms of higher-order additive energies, as can be seen in \cite{CGKPTZ25}.
Recall that the additive energy of a finite set quantifies how often a given sum is represented. Specifically, the $2k$-fold additive energy of a finite set
$A\subset\mathbb{F}_q^d$ is defined to be
\[
\Lambda_{2k}(A)
:= \bigl|\{(x_1,\dots,x_{2k})\in A^{2k}:
x_1+\cdots+x_k = x_{k+1}+\cdots+x_{2k}\}\bigr|.
\]
Combining these notions led to the following equivalence, taken from \cite{CGKPTZ25}.


\begin{lemma}\label{cor:equivalence-Salem-bound}
Let $s\in[0,1]$ and $k\ge1$. The $(2k,s)$-Salem condition
\[
\|\widehat{A}\|_{2k} \ll q^{-d}|A|^{1-s}
\]
is equivalent to the additive energy bound
\[
\Lambda_{2k}(A)
\ll
\frac{|A|^{2k}}{q^d}
+
|A|^{2k(1-s)}.
\]
\end{lemma}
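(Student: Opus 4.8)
The plan is to reduce the equivalence to an \emph{exact} identity relating $\|\widehat{A}\|_{2k}^{2k}$ to $\Lambda_{2k}(A)$, after which both implications are essentially immediate. First I would unwind the definitions: writing $\widehat{A}(x)=q^{-d}\sum_{y\in A}\chi(-x\cdot y)$ and splitting $|\widehat{A}(x)|^{2k}=\widehat{A}(x)^k\,\overline{\widehat{A}(x)}^k$ into $k$ copies of $\widehat{A}(x)$ and $k$ copies of its conjugate, multiplying out gives
\[
|\widehat{A}(x)|^{2k}
= q^{-2dk}\sum_{(y_1,\dots,y_{2k})\in A^{2k}}
\chi\!\big(-x\cdot(y_1+\cdots+y_k-y_{k+1}-\cdots-y_{2k})\big).
\]

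Next I would sum over all $x\in\mathbb{F}_q^d$, temporarily including $x=0$, and apply character orthogonality, i.e.\ $\sum_{x\in\mathbb{F}_q^d}\chi(-x\cdot v)$ equals $q^d$ when $v=0$ and vanishes otherwise. Only the $2k$-tuples with $y_1+\cdots+y_k=y_{k+1}+\cdots+y_{2k}$ survive, so $\sum_{x\in\mathbb{F}_q^d}|\widehat{A}(x)|^{2k}=q^{d-2dk}\Lambda_{2k}(A)$. Subtracting the zero-frequency term $|\widehat{A}(0)|^{2k}=q^{-2dk}|A|^{2k}$ and dividing by $q^d$ yields
\[
\|\widehat{A}\|_{2k}^{2k}
= q^{-2dk}\big(\Lambda_{2k}(A)-q^{-d}|A|^{2k}\big),
\]
equivalently $q^{2dk}\|\widehat{A}\|_{2k}^{2k}=\Lambda_{2k}(A)-q^{-d}|A|^{2k}\ge 0$.

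From this identity both directions drop out after raising the $(2k,s)$-Salem inequality to the power $2k$: $\|\widehat{A}\|_{2k}\ll q^{-d}|A|^{1-s}$ holds if and only if $\|\widehat{A}\|_{2k}^{2k}\ll q^{-2dk}|A|^{2k(1-s)}$, which by the identity is exactly $\Lambda_{2k}(A)-q^{-d}|A|^{2k}\ll |A|^{2k(1-s)}$. The forward implication then immediately gives $\Lambda_{2k}(A)\ll q^{-d}|A|^{2k}+|A|^{2k(1-s)}$; conversely, the stated energy bound together with the nonnegativity of $\Lambda_{2k}(A)-q^{-d}|A|^{2k}$ recovers the Fourier bound, the two formulations agreeing up to absolute constants in the range $|A|\ll q^{d/(2ks)}$ where $q^{-d}|A|^{2k}\ll |A|^{2k(1-s)}$ — precisely the regime in which $(2k,s)$-Salem sets are of interest.

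I do not expect a genuine obstacle here: the whole argument is a single character-orthogonality (Plancherel-type) computation. The only points needing care are the bookkeeping of the $x=0$ frequency — which produces exactly the ``trivial'' main term $q^{-d}|A|^{2k}$ of the energy — and the elementary observation that the two ways of writing the energy bound coincide up to constants in the parameter range actually used.
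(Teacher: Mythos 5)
Your proof is correct and uses what is essentially the only natural approach — and, since the paper states this lemma as a citation to \cite{CGKPTZ25} without reproducing the proof, the comparison is necessarily against the standard argument rather than text in the paper. Your Plancherel-type identity is derived correctly: expanding $|\widehat{A}(x)|^{2k}$ as a $2k$-fold sum, summing over all $x$, applying orthogonality of characters, and then subtracting the $x=0$ term gives exactly
\[
\|\widehat{A}\|_{2k}^{2k}
= q^{-2dk}\bigl(\Lambda_{2k}(A)-q^{-d}|A|^{2k}\bigr),
\]
and the forward implication (Fourier bound $\Rightarrow$ energy bound) then drops out immediately by adding back the trivial term.

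One point deserves sharper emphasis than your last paragraph gives it. For the converse direction, the identity alone does \emph{not} recover the Fourier bound from the energy bound without a size restriction. If $\Lambda_{2k}(A)\le C\bigl(q^{-d}|A|^{2k}+|A|^{2k(1-s)}\bigr)$ with $C>1$, then
\[
q^{2dk}\|\widehat{A}\|_{2k}^{2k}
=\Lambda_{2k}(A)-q^{-d}|A|^{2k}
\le (C-1)\,q^{-d}|A|^{2k}+C\,|A|^{2k(1-s)},
\]
and when $|A|\gg q^{\frac{d}{2ks}}$ the first term on the right can dominate, so the bound $\|\widehat{A}\|_{2k}\ll q^{-d}|A|^{1-s}$ need not follow. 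You do note the restriction $|A|\ll q^{\frac{d}{2ks}}$, and this is indeed the range of interest throughout the paper; but the phrasing that nonnegativity of $\Lambda_{2k}(A)-q^{-d}|A|^{2k}$ ``recovers the Fourier bound'' reads as if the converse were unconditional. You should instead state plainly that the converse implication holds only in the regime where $|A|^{2k(1-s)}$ dominates $q^{-d}|A|^{2k}$, i.e.\ $|A|\ll q^{\frac{d}{2ks}}$, which is a genuine caveat on the equivalence as literally stated (and consistent with \cite[Remark 16]{CGKPTZ25}, which the paper later invokes precisely in that regime).
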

Since Lemma~\ref{cor:equivalence-Salem-bound} provides a direct conversion mechanism, we will work directly with additive energy estimates. To this end, we adopt the following definition.
\begin{definition}\label{(4,s)-Salem-set}
Let $s > 0$. A finite set $A \subset \mathbb{F}_q^d$ is called a \emph{$(4,s)$-Salem set} if
\[
\Lambda_4(A) \ll |A|^{4 - 4s}+\frac{|A|^4}{q^d},
\]
where the implied constant may depend on $s$ and $d$ but not on $q$ or $A$.
\end{definition}

Let $\|x\| := x_1^2 + \cdots + x_d^2$ for $x \in \mathbb{F}_q^d$.

In this paper, we are interested in an incidence structure associated with $(4, s)$-Salem sets, namely, incidences between points and spheres in $\mathbb{F}_q^d$, where the point set is a $(4, s)$-Salem set. Beyond their geometric interest, finite field incidence bounds have found numerous applications in additive combinatorics, pseudorandomness and extractor theory, restriction theory, and related topics in theoretical computer science, see \cite{bkt, Zdvir, KPV21, Lewko19, MT04, MSS1}.

For arbitrary point sets, due to degenerate configurations, incidence bounds might be weak without additional assumptions. In contrast, random point sets admit strong incidence bounds, but their lack of explicit structure limits the applicability of such results.

The $(4, s)$-Salem setting provides a natural framework for bridging this gap. The parameter $s \in [0,1]$ measures how arithmetically \textit{spread out} the set is: larger values of $s$ indicate lower additive energy and behavior closer to that of a random set. This intermediate structure allows us to obtain incidence bounds that interpolate between the extremes.

Let $P \subset \mathbb{F}_q^d$ be a set of points and let $S$ be a set of spheres. We denote by $I(P,S)$ the number of incidences:
\[
I(P,S) := \bigl|\{(x,\sigma) \in P \times S : x \text{ lies on } \sigma\}\bigr|.
\]
In this paper, a sphere centered at $a\in \mathbb{F}_q^d$ of radius $r\in \mathbb{F}_q$ is defined by 
\[\{x\in \mathbb{F}_q^d\colon ||x-a||=r\}.\]

When $P$ and $S$ are arbitrary sets, the following theorem was proved by Cilleruelo, Iosevich, Lund, Roche-Newton, and Rudnev \cite{CILRR}, and independently by Phuong, Pham, and Vinh \cite{PPV} by using an elementary counting argument and a spectral graph theory method, respectively,

\begin{theorem}\label{Thm-incidence-spheres}
    Let $P$ be a set of points in $\mathbb{F}_q^d$ and $S$ be a set of spheres in $\mathbb{F}_q^d$. Then, we have 
    \[ \left|I(P,S)-\frac{|P| \, |S|}{q} \right| \leq q^{\frac{d}{2}} |P|^{\frac{1}{2}} \, |S|^{\frac{1}{2}}.\]
\end{theorem}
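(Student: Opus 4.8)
The plan is to prove this by a direct Fourier (character-sum) computation, essentially the argument of Cilleruelo--Iosevich--Lund--Roche-Newton--Rudnev \cite{CILRR}; the spectral method of \cite{PPV} reaches the same bound by viewing $I(P,S)$ through an incidence graph whose nontrivial eigenvalues are controlled by Gauss sums. Throughout one takes $q$ odd, so that $x\mapsto cx$ is a bijection of $\mathbb{F}_q^d$ for $c\neq0$; this is standard in this setting. Writing a sphere as the zero set of $\|x-a\|-r$ and using the orthogonality relation $\tfrac1q\sum_{t\in\mathbb{F}_q}\chi(tz)=\mathbf{1}[z=0]$, one gets
\[
I(P,S)=\frac1q\sum_{t\in\mathbb{F}_q}\sum_{(a,r)\in S}\sum_{x\in P}\chi\!\big(t(\|x-a\|-r)\big),
\]
whose $t=0$ term is exactly the claimed main term $|P||S|/q$. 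For the error term (the sum over $t\neq0$) I would expand $\|x-a\|=\|x\|-2(a\cdot x)+\|a\|$ and collect the $x$-dependence: with $h_t(x):=P(x)\,\chi(t\|x\|)$ one has $\sum_{x\in P}\chi(t\|x\|)\chi\big(-(2ta)\cdot x\big)=q^{d}\,\widehat{h_t}(2ta)$, so
\[
I(P,S)-\frac{|P||S|}{q}=q^{d-1}\sum_{t\neq0}\sum_{a\in\mathbb{F}_q^d}\Big(\sum_{r:(a,r)\in S}\chi(-tr)\Big)\chi(t\|a\|)\,\widehat{h_t}(2ta).
\]
(Equivalently, this is the standard lift sending $x\mapsto(x,\|x\|)\in\mathbb{F}_q^{d+1}$ and the sphere $\|x-a\|=r$ to the affine hyperplane $y_{d+1}-2(a\cdot y)+\|a\|-r=0$, so the identity above is just the point--hyperplane dictionary restricted to points on the paraboloid.)

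Next I would apply Cauchy--Schwarz over the pairs $(t,a)$ with $t\neq0$, separating the $S$-dependent factor $A(t,a):=\big(\sum_{r:(a,r)\in S}\chi(-tr)\big)\chi(t\|a\|)$ from the $P$-dependent factor $B(t,a):=\widehat{h_t}(2ta)$. The $S$-factor is handled by Plancherel on $\mathbb{F}_q$: for each fixed center $a$, $\sum_{t\in\mathbb{F}_q}\big|\sum_{r:(a,r)\in S}\chi(-tr)\big|^2=q\,|\{r:(a,r)\in S\}|$, and summing over $a$ gives $\sum_{t\neq0}\sum_a|A(t,a)|^2\le q|S|$. The $P$-factor is handled by Plancherel on $\mathbb{F}_q^d$: for each fixed $t\neq0$ the map $a\mapsto 2ta$ is a bijection, so $\sum_a|\widehat{h_t}(2ta)|^2=\sum_{m}|\widehat{h_t}(m)|^2=q^{-d}\sum_x|h_t(x)|^2=q^{-d}|P|$ since $|\chi(t\|x\|)|=1$; hence $\sum_{t\neq0}\sum_a|B(t,a)|^2\le (q-1)q^{-d}|P|$. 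Combining the two estimates,
\[
\Big|I(P,S)-\frac{|P||S|}{q}\Big|\le q^{d-1}(q|S|)^{1/2}\big((q-1)q^{-d}|P|\big)^{1/2}\le q^{d/2}|P|^{1/2}|S|^{1/2}.
\]

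The computation is short, so the only real point of care is arranging the Cauchy--Schwarz so that the coupling of $t$ and $a$ inside $\widehat{h_t}(2ta)$ costs nothing: that is why one splits along the full index $(t,a)$ and applies Plancherel on $\mathbb{F}_q^d$ one value of $t$ at a time, exploiting bijectivity of $a\mapsto 2ta$. It is also worth noting that the curvature of the spheres enters only through the unimodular phase $\chi(t\|x\|)$, which washes out of the $L^2$-mass of $h_t$; this is precisely why spheres buy nothing for arbitrary $P$ in Theorem~\ref{Thm-incidence-spheres}, and why it is the additive structure of $P$ (through $\Lambda_4$) that must be exploited in order to improve on this bound.
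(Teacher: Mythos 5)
Your proof is correct, and the computation closes cleanly: the $A$-factor Plancherel in the $r$-variable gives $q|S|$, the $B$-factor Plancherel on $\mathbb{F}_q^d$ (one $t$ at a time, via the bijection $a\mapsto 2ta$ for $t\neq0$ and $q$ odd) gives $(q-1)q^{-d}|P|$, and the product yields $q^{(d-1)/2}(q-1)^{1/2}|P|^{1/2}|S|^{1/2}\le q^{d/2}|P|^{1/2}|S|^{1/2}$ as required. Note that the paper does not prove Theorem~\ref{Thm-incidence-spheres} itself; it is cited from \cite{CILRR} and \cite{PPV}, and your character-sum argument faithfully reproduces the elementary method of \cite{CILRR} (while you correctly remark that the spectral graph approach of \cite{PPV} gives the same exponents). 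Your closing observation — that the unimodular phase $\chi(t\|x\|)$ is invisible to the $L^2$-mass of $h_t$, so the curvature of spheres cannot help for arbitrary $P$, and one must instead exploit the additive structure of $P$ via $\Lambda_4$ — is exactly the motivation for the lifting-plus-Salem strategy the paper develops in Section~\ref{section2}.
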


In \cite{KLP22}, Koh, Lee, and Pham introduced a clever approach that connects this incidence problem with cone extension estimates. As a consequence, they obtained the following improvement. We will comment more on their framework in the last section of this paper.
\begin{theorem}[Koh-Lee-Pham, \cite{KLP22}]\label{Thm-incidence-Q-spheres}
Let $P$ be a set of points in $\mathbb{F}_q^d$ and $S$ be a set of spheres in $\mathbb{F}_q^d$. 
\begin{enumerate}
\item[(i)]
If $d \equiv 2 \pmod{4}$, $q \equiv 3 \pmod{4}$, and
$|S| \le q^{\frac{d}{2}}$, then we have
\[
\left|
I(P,S)
-
\frac{|P| \, |S|}{q}
\right|
\le
q^{\frac{d-1}{2}}
|P|^{\frac12}|S|^{\frac{1}{2}}.
\]

\item[(ii)]
If $d \equiv 0 \pmod{4}$, or $d$ is even and $q \equiv 1 \pmod{4}$, then the same conclusion holds under the condition
\[
|S| \le q^{\frac{d-2}{2}}.
\]

\item[(iii)]
If $d \ge 3$ is an odd integer, then the same conclusion holds under the condition
\[
|S| \le q^{\frac{d-1}{2}}.
\]
\end{enumerate}
\end{theorem}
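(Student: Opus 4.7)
The plan is to convert the point-sphere incidence problem in $\mathbb{F}_q^d$ into a Fourier extension estimate on a cone in one higher dimension. The starting point is the standard lift: the sphere condition $\|x - a\| = r$ is equivalent to the affine equation
\[
\|x\| - 2\,a \cdot x + (\|a\| - r) = 0,
\]
so mapping each $p \in P$ to $\widetilde p = (p, \|p\|) \in \mathbb{F}_q^{d+1}$ turns a point-sphere incidence into a point-hyperplane incidence in $\mathbb{F}_q^{d+1}$, with the lifted points constrained to lie on the paraboloid $\Pi = \{(y,t) : t = \|y\|\}$. A further homogenization identifies $\Pi$ with an affine slice of the isotropic cone
\[
\mathcal{C} = \{z \in \mathbb{F}_q^{d+2} : z_1^{2} + \cdots + z_d^{2} - z_{d+1}z_{d+2} = 0\},
\]
under which the spheres become hyperplane sections of $\mathcal{C}$.

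Next I would Fourier-expand the incidence count as
\[
I(P,S) - \frac{|P|\,|S|}{q} = \sum_{\xi \ne 0} \widehat{P}(\xi)\, \overline{\widehat{S}(\xi)}\, K(\xi),
\]
where $K(\xi)$ is the normalized Fourier transform of a single sphere, which (via the standard Gauss sum evaluation of $\sum_{\|y\|=r}\chi(-\xi\cdot y)$) is essentially supported on a shifted copy of the cone $\mathcal{C}$. Using the trivial bounds for $\widehat P$ and $\widehat S$ with Cauchy-Schwarz and Plancherel already recovers Theorem~\ref{Thm-incidence-spheres}. The improvement comes from replacing Cauchy-Schwarz by a cone extension estimate of the form
\[
\big\|\widehat{g\,d\sigma_{\mathcal{C}}}\big\|_{L^{4}(\mathbb{F}_q^{d+2})} \ll q^{-\gamma}\, \|g\|_{L^{2}(\sigma_{\mathcal{C}})}
\]
for a suitable gain $\gamma$; by duality this becomes a restriction-type inequality, and inserting it into the incidence sum yields the extra factor of $q^{1/2}$ that separates Theorem~\ref{Thm-incidence-Q-spheres} from Theorem~\ref{Thm-incidence-spheres}.

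The three congruence cases reflect the arithmetic of the quadratic form defining $\mathcal{C}$: the residues of $d$ and $q$ modulo $4$ (the latter controlling whether $-1$ is a square in $\mathbb{F}_q$) together determine the discriminant and the Witt index of the form on $\mathbb{F}_q^{d+2}$, and hence both the cardinality of $\mathcal{C}$ and the precise Gauss sum appearing in $\widehat{d\sigma_{\mathcal{C}}}$. The sharp $L^{4}$ extension bound, and therefore the admissible threshold on $|S|$, is different in each regime, which is what produces the three separate statements.

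The main obstacle, as I see it, is establishing the cone extension estimate with the correct exponent in each of the three regimes. This step requires a careful $TT^{*}$ or bilinear decomposition combined with a precise Gauss-sum analysis of $\widehat{d\sigma_{\mathcal{C}}}$, and the congruence restrictions on $d$ and $q$ are precisely what render the relevant algebraic identities tractable. Once the extension estimate is in hand, the passage back from Fourier bounds to the incidence inequality is standard.
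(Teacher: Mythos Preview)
The paper does not prove Theorem~\ref{Thm-incidence-Q-spheres} itself; it is quoted from \cite{KLP22}, and Section~\ref{subsec:KLP-framework} only sketches the Koh--Lee--Pham framework at a high level. That said, your outline is faithful to that framework as the paper describes it: the lift $p\mapsto(p,\|p\|)$ to the paraboloid in $\mathbb{F}_q^{d+1}$, the further homogenization to the cone $\mathcal{C}\subset\mathbb{F}_q^{d+2}$, and the conversion of the incidence discrepancy into an expression controlled by a cone extension estimate $R^*_{\mathcal{C}}(2\to r)\ll 1$ are exactly the ingredients the paper attributes to \cite{KLP22}. Your diagnosis of the three cases as arising from the Witt index and Gauss-sum behavior of the quadratic form is also correct, and you are right that the hard analytic work lies in the endpoint $L^2\to L^r$ extension estimate, which the present paper treats as a black box. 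One small slip: you write ``cone in one higher dimension'' but then correctly place $\mathcal{C}$ in $\mathbb{F}_q^{d+2}$; the intermediate paraboloid lives in $d+1$ variables and the cone in $d+2$.
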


This result has been extended to other ranges of $d$ and $q$ in \cite{Kohpham} and recently in \cite{KLXY25}. Theorems \ref{Thm-incidence-spheres} and \ref{Thm-incidence-Q-spheres} are generally optimal. Some sharpness examples can be found in \cite{KLP22} and \cite{KLXY25}.

Our main result in the $(4, s)$-Salem setting is the following.


\begin{theorem}\label{thm1}
Let $d \ge 2$ and let $P \subset \mathbb{F}_q^d$ be a $(4,s)$-Salem set for some $s \in \big(\frac{1}{4}, \frac{1}{2} \big]$. Let $S$ be a finite set of spheres in $\mathbb{F}_q^d$.
Assume that
\[
|P| \ll q^{\frac{d}{4s}}.
\]
Then the number of incidences between $P$ and $S$ satisfies
\[
\left| I(P,S) - \frac{|P| \,|S|}{q} \right|
\ll q^{\frac{d}{4}} |P|^{1-s} |S|^{\frac{3}{4}},
\]
which is optimal.
\end{theorem}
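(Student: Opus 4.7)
The strategy is to lift the incidence problem to one dimension higher via the paraboloid embedding
\[
\widetilde{P} := \{(x,\|x\|) : x \in P\}\subset \mathbb{F}_q^{d+1},
\]
which preserves the cardinality and turns the sphere condition $\|x-a\|=r$ into the hyperplane condition $y_{d+1}-2a\cdot y' = r-\|a\|$ on the lifted variable $y=(y',y_{d+1})$. Writing out the incidence count using the orthogonality identity $\mathbf{1}[u=0] = q^{-1}\sum_t \chi(tu)$ and isolating the $t=0$ contribution produces
\[
I(P,S) - \frac{|P||S|}{q} \;=\; q^d \sum_{t\neq 0}\sum_{(a,r)\in S} \chi\!\big(t(\|a\|-r)\big)\,\widehat{\widetilde{P}}(2ta,-t).
\]

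I would then reorganize the outer sum by sphere center: writing $T_a(t):=\sum_{r:(a,r)\in S}\chi(-tr)$ and $m(a):=|\{r:(a,r)\in S\}|$, H\"older's inequality with dual exponents $(4,\tfrac{4}{3})$ separates the error into an $L^4$ Fourier factor and an $L^{4/3}$ character-sum factor. For the first factor, the map $(t,a)\mapsto (2ta,-t)$ is a bijection from $\mathbb{F}_q^{\ast}\times\mathbb{F}_q^d$ onto $\{\xi\in \mathbb{F}_q^{d+1}:\xi_{d+1}\neq 0\}$, so the $L^4$ sum is $\leq q^{d+1}\|\widehat{\widetilde{P}}\|_4^4$. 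The second factor is controlled by the Plancherel identity $\sum_{t}|T_a(t)|^2 = q\,m(a)$ together with H\"older in $t$, producing $\sum_{t\neq 0}|T_a(t)|^{4/3}\ll q\,m(a)^{2/3}$; summing over $a$ and using $m(a)^{2/3}\leq m(a)$ (since $m(a)\geq 1$ on its support) gives the bound $q|S|$ for the second factor.

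The crucial structural input is that $\widetilde{P}$ itself satisfies a $(4,s)$-Salem bound in $\mathbb{F}_q^{d+1}$. This follows from the trivial pointwise comparison
\[
\Lambda_4(\widetilde{P}) \;=\; \#\{(x_1,\ldots,x_4)\in P^4 : x_1+x_2=x_3+x_4,\ \|x_1\|+\|x_2\|=\|x_3\|+\|x_4\|\} \;\leq\; \Lambda_4(P),
\]
combined with the $(4,s)$-Salem bound $\Lambda_4(P)\ll |P|^{4-4s}+|P|^4/q^d$. The hypothesis $|P|\ll q^{d/(4s)}$ is precisely the threshold at which $|P|^4/q^d \leq |P|^{4-4s}$, so $\Lambda_4(\widetilde{P})\ll |\widetilde{P}|^{4-4s}$, and Lemma~\ref{cor:equivalence-Salem-bound} then yields $\|\widehat{\widetilde{P}}\|_4 \ll q^{-(d+1)}|P|^{1-s}$. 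Assembling the pieces gives $|E|\ll q^d\cdot q^{(d+1)/4}\cdot q^{-(d+1)}|P|^{1-s}\cdot (q|S|)^{3/4} = q^{d/4}|P|^{1-s}|S|^{3/4}$, matching the theorem.

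\textbf{Main obstacle.} The only real subtlety is ensuring the paraboloid lift inherits the $(4,s)$-Salem property in the higher ambient dimension without leaking a factor of $q$. The hypothesis $|P|\ll q^{d/(4s)}$ is tailored exactly to suppress the random contribution $|P|^4/q^d$ in the additive-energy bound for $P$, which in turn lets the lifted set inherit the same exponent $1-s$ in $L^4$ relative to the $(d+1)$-dimensional normalization; outside this threshold the lifted random term $|\widetilde{P}|^4/q^{d+1}$ would dominate and the argument would degrade. Once this lifting lemma is secured, the remainder is a routine Fourier-H\"older computation, and the fact that all spheres lift to hyperplanes with a common nonzero last coordinate of the normal is what makes the bijectivity statement for $(t,a)\mapsto(2ta,-t)$ clean.
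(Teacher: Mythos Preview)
Your proof is correct and rests on the same key lemma as the paper: the paraboloid lift $\widetilde{P}$ inherits the $(4,s)$-Salem property because $\Lambda_4(\widetilde{P})\le\Lambda_4(P)$, with the size hypothesis $|P|\ll q^{d/(4s)}$ ensuring the random term $|P|^4/q^d$ is dominated by $|P|^{4-4s}$.

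Where you diverge from the paper is in the execution after the lift. The paper converts spheres to hyperplanes in $\mathbb{F}_q^{d+1}$ and then invokes two black-box incidence results from \cite{CGKPTZ25}: one for hyperplanes with nonzero constant term ($r\ne\|a\|$) and a separate symmetric counting lemma, combined with a dilation trick, for the case $r=\|a\|$. You bypass this case split entirely by carrying out the Fourier--H\"older computation directly and observing that the map $(t,a)\mapsto(2ta,-t)$ is a bijection onto frequencies with nonzero last coordinate, so the origin is automatically avoided regardless of whether $r-\|a\|$ vanishes. This is a genuine streamlining: your argument is self-contained and treats all spheres uniformly, whereas the paper's modular approach trades some economy for reusability of the cited lemmas. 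Your bound $\sum_{t\ne0}|T_a(t)|^{4/3}\le (q\,m(a))^{2/3}(q-1)^{1/3}$ via H\"older and Plancherel, followed by $\sum_a m(a)^{2/3}\le\sum_a m(a)=|S|$, is clean and loses nothing relative to the paper's route. Note that both arguments implicitly require odd characteristic (for $2$ to be invertible in the normal vector $(2ta,-t)$), and neither your proposal nor the paper's Section~\ref{section2} addresses the optimality clause, which is handled separately in Section~\ref{sec:sharpness}.
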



\begin{remark}
\label{rem:compare-KLXY}
When $P$ is a sufficiently small subset of $\mathbb F_q^d,$ (namely, $|P| \ll q^{\frac{d}{4s}}$), the incidence estimate in Theorem~\ref{thm1} improves upon the general point-sphere bound in Theorem~\ref{Thm-incidence-spheres} whenever
\[
|S|\ll q^{d}\,|P|^{4s-2},
\]
and upon the refined bounds of Koh, Lee, and Pham in Theorem~\ref{Thm-incidence-Q-spheres} whenever
\[
|S|\ll q^{d-2}\,|P|^{4s-2}.
\]
In particular, in the extremal Salem case $s=\tfrac12$, the improvement holds for all families of spheres with $|S|\ll q^{d}$ and $|S|\ll q^{d-2}$, respectively.
\end{remark}

For incidence estimates in finite settings, one often seeks conditions under which the incidence behavior matches what would be expected from a \emph{random} set with similar size parameters. Here, the expected number of incidences is $q^{-1}|P||S|,$ so the estimates above quantify how far from the expected value the number of incidences can stray.


\paragraph*{Sketch of the proof of Theorem~\ref{thm1}.}
We lift $P\subset\mathbb{F}_q^d$ to $P'=\{(x,\|x\|):x\in P\}\subset\mathbb{F}_q^{d+1}$ and observe that any additive quadruple in $P'$,
\[
(x_1,\|x_1\|)+(x_2,\|x_2\|)=(x_3,\|x_3\|)+(x_4,\|x_4\|),
\]
forces $x_1+x_2=x_3+x_4$ and $\|x_1\|+\|x_2\|=\|x_3\|+\|x_4\|$, hence comes from an additive quadruple in $P$. 
In particular, lifting cannot create new additive quadruples, so we have the trivial upper bound $\Lambda_4(P')\le\Lambda_4(P)\ll|P|^{4-4s}=|P'|^{4-4s}$, and thus $P'$ is also a $(4,s)$–Salem set in $\mathbb{F}_q^{d+1}$. 
Next we rewrite the sphere $\|x-a\|=r$ as an affine hyperplane $(-2a,1)\cdot(x,t)=r-\|a\|$ in $\mathbb{F}_q^{d+1}$, so that $x\in P$ and $\|x-a\|=r$ if and only if $(x,\|x\|)\in P'\cap H_{a,r}$, where $H_{a,r}:= \{(x,t) \in \mathbb{F}_q^{d+1} : (-2a,1) \cdot (x,t) = r - \|a\|\}$. 
This identifies $I(P,S)$ with a point–hyperplane incidence number $I(P',H)$ in $\mathbb{F}_q^{d+1}$. We then split the spheres into those with $r-\|a\|\neq 0$ and those with $r=\|a\|$. 

For the first class, we apply the point-hyperplane incidence bound for $(4,s)$–Salem sets from Section~\ref{section2} to $P'$ and the corresponding family of hyperplanes, obtaining the main term $\frac{|P| |S|}{q}$ with an error of size $q^{\frac{d}{4}}|P|^{1-s}|S|^{\frac{3}{4}}$. 

For the second class, the equation $\|x-a\|=\|a\|$ is equivalent to $-2x\cdot a+\|x\|=0$, and we encode these incidences via the symmetric incidence estimate of Section~\ref{section2} applied to $P'$ and a suitable set of normal vectors; this yields an error term of the same form. Adding the two contributions and using the size condition on $|P|$ to ensure that the error is smaller than the main term gives the desired estimate.

The following result is a direct consequence of the incidence bound. It can be seen as the unit distance problem analog of the results on distinct distances given in \cite{iosevich-Rudnev-07} and \cite{fraser2}.


\begin{corollary}[Unit distances in $(4,s)$-Salem sets]\label{cor:unit-distances}
    Let $d \ge 2$ and let $P \subset \mathbb{F}_q^d$ be a $(4,s)$-Salem set for some $s \in (\frac{1}{4}, \frac{1}{2}]$. For any non-zero distance $r \in \mathbb{F}_q^\times$, let $N_r(P)$ denote the number of pairs $(x,y) \in P \times P$ such that $\|x-y\|=r$. If $|P| \ll q^{\frac{d}{4s}}$, then
    \[
    \left| N_r(P) - \frac{|P|^2}{q} \right| \ll q^{\frac{d}{4}} |P|^{\frac{7}{4}-s}.
    \]
\end{corollary}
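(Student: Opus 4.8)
The plan is to derive Corollary~\ref{cor:unit-distances} directly from Theorem~\ref{thm1} by choosing $S$ to be the family of spheres of radius $r$ centered at the points of $P$. Concretely, I would set
\[
S := \{\sigma_x : x \in P\}, \qquad \sigma_x := \{y \in \mathbb{F}_q^d : \|y-x\| = r\},
\]
so that $|S| \le |P|$. Since $r \in \mathbb{F}_q^\times$, the sphere $\sigma_x$ does not degenerate to a single point (its center $x$ lies at distance $0 \ne r$ from itself), and distinct centers give distinct spheres, so in fact $|S| = |P|$. With this choice, a pair $(y,x) \in P \times P$ satisfies $\|y-x\| = r$ precisely when the point $y$ lies on the sphere $\sigma_x$, hence
\[
N_r(P) = I(P,S).
\]

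First I would verify that the hypotheses of Theorem~\ref{thm1} are met: $P$ is a $(4,s)$-Salem set with $s \in (\tfrac14,\tfrac12]$, $d \ge 2$, and the size condition $|P| \ll q^{\frac{d}{4s}}$ holds by assumption; the family $S$ is finite. Then I would invoke Theorem~\ref{thm1} to obtain
\[
\left| I(P,S) - \frac{|P|\,|S|}{q} \right| \ll q^{\frac{d}{4}}\, |P|^{1-s}\, |S|^{\frac{3}{4}}.
\]
Substituting $I(P,S) = N_r(P)$ and $|S| = |P|$ gives
\[
\left| N_r(P) - \frac{|P|^2}{q} \right| \ll q^{\frac{d}{4}}\, |P|^{1-s}\, |P|^{\frac{3}{4}} = q^{\frac{d}{4}}\, |P|^{\frac{7}{4}-s},
\]
which is exactly the claimed bound.

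There is no real obstacle here — the corollary is a specialization of the main theorem. The only points that need a line of care are (i) checking that the spheres $\sigma_x$ are genuine spheres in the sense used in the paper (guaranteed by $r \ne 0$, so that the incidence bound applies without the degenerate-sphere caveat), and (ii) noting that even if one only bounds $|S| \le |P|$ rather than computing it exactly, the inequality $|S|^{3/4} \le |P|^{3/4}$ still yields the stated estimate, and the main term $|P||S|/q$ would then be replaced by a quantity within the error term. Using $|S| = |P|$ keeps the main term exactly $|P|^2/q$ as stated. I would also remark, following the narrative in the excerpt, that this is the natural unit-distance analogue of the distinct-distance results of Iosevich–Rudnev \cite{iosevich-Rudnev-07} and Fraser \cite{fraser2}, with the Salem parameter $s$ interpolating between the random and arbitrary regimes.
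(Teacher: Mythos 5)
Your proof is correct and follows essentially the same route as the paper: take $S$ to be the radius-$r$ spheres centered at points of $P$, identify $N_r(P)$ with $I(P,S)$, and apply Theorem~\ref{thm1} with $|S|=|P|$. Your added remark justifying $|S|=|P|$ (distinct centers give distinct spheres since $r\ne 0$) is a small extra care the paper simply asserts.
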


\begin{proof}
    Let $S$ be the set of spheres of radius $r$ centered at points in $P$, defined as $S := \{ \sigma_a : a \in P \}$, where $\sigma_a = \{x \in \mathbb{F}_q^d : \|x-a\|=r\}$. Clearly, $|S| = |P|$.
    Observe that a pair $(x,y) \in P \times P$ satisfies $\|x-y\|=r$ if and only if $x$ is incident to the sphere $\sigma_y \in S$. Thus, $N_r(P) = I(P, S)$.
    Applying Theorem \ref{thm1} with $|S|=|P|$, the error term becomes
    \[
    \ll q^{\frac{d}{4}} |P|^{1-s} |S|^{\frac{3}{4}} = q^{\frac{d}{4}} |P|^{1-s} |P|^{\frac{3}{4}} = q^{\frac{d}{4}} |P|^{\frac{7}{4}-s}.
    \]
    This concludes the proof.
\end{proof}


\begin{remark}[Comparison with Iosevich-Rudnev's result]
    It is instructive to compare Corollary \ref{cor:unit-distances} with the classical bound for general sets established by Iosevich and Rudnev in \cite{iosevich-Rudnev-07}, which states that
    \[
    \left| N_r(P) - \frac{|P|^2}{q} \right| \lesssim q^{\frac{d-1}{2}} |P|.
    \]
    Our bound for $(4,s)$-Salem sets offers an improvement over the general bound when $|P| \ll q^{\frac{d}{4s}}$ and
    \[
    q^{\frac{d}{4}} |P|^{\frac{7}{4}-s} \ll q^{\frac{d-1}{2}} |P| \iff |P|^{\frac{3}{4}-s} \ll q^{\frac{d-2}{4}}.
    \]
    In the extreme case where $s=\frac{1}{2}$, this condition simplifies to $|P| \ll q^{d-2}$. Note that there are many examples of $(4, \frac{1}{2})$-Salem sets, and we provide an explicit construction in Section \ref{sec:explicit-sidon}.
\end{remark}
\begin{remark}
    Corollary \ref{cor:unit-distances} implies that, for a $(4, s)$-Salem set $P$ in $\mathbb{F}_q^d$, if $q^{\frac{d+4}{4s+1}}\ll |P| \ll q^{\frac{d}{4s}}$, then $\Delta(P)=\mathbb{F}_q$. So, if $s=\frac{1}{2}$, then the condition $|P|\gg q^\frac{d+4}{3}$ is sufficient. 
    If we only want to cover a positive proportion of all distances, then a better exponent of $\min \big\{ \frac{d+2}{4s+1}, \frac{d+4}{8s} \big\}$ is obtained in \cite{CGKPTZ25}. When $s=\frac{1}{4}$, i.e. the set is arbitrary in $\mathbb{F}_q^d$, the distance problem has received much attention during the last two decades. 
    The optimal condition in odd dimensions is known to be $|P|\gg q^{\frac{d+1}{2}}$ in \cite{HIKR11}. 
    In two dimensions, the state-of-the-art exponents of this problem  are $\frac{5}{4}$ and $\frac{4}{3}$, which can be found in \cite{CEHIK12, MPPRS} over prime fields and arbitrary finite fields, respectively. We refer the reader to a recent paper \cite{PY25} for some surprising applications of this topic in intersection patterns and expanding phenomena.
\end{remark}

This viewpoint can also be used to prove results in additive combinatorics. We discuss two ways to relax the notion of a Sidon set, and prove some basic results about each. We also prove a sum-product application for $(u,s)$-Salem sets $A$. Specifically, in Theorem \ref{thm:sum-product-salem} below, we get estimates involving sumsets $A+A$ and the $d$-fold sumset of the set of squares of elements of $A.$

The paper is organized as follows.
In Section~\ref{section2}, we prove the main theorem, Theorem~\ref{thm1}, by applying incidence bounds.
Section~\ref{sec:sharpness} is devoted to the construction of examples demonstrating the sharpness of the exponents appearing in Theorem~\ref{thm1}.
In Section~\ref{sec:us-salem}, by employing the same Fourier moment method, we extend the point-hyperplane and point-sphere incidence bounds from
$(4,s)$-Salem sets to $(u,s)$-Salem sets for even exponents $u=2k\ge 4$.
Finally, Section~\ref{sec:s-sidon} has the applications of the main results to additive combinatorics, as well as a brief discussion on some observations related to the extension estimates for cones from Koh, Lee, and and Pham in \cite{KLP22}.

\section{Proof of Theorem \ref{thm1}}\label{section2}

We first recall the incidence bounds from \cite{CGKPTZ25} that we will use.
\begin{theorem}[{\cite[Theorem 41]{CGKPTZ25}}]\label{nonzero}
Let $d \ge 1$, and let $P \subset \mathbb{F}_q^{d+1}$ be a $(4,s)$-Salem set. Let $H$ be a finite set of hyperplanes in $\mathbb{F}_q^{d+1}$ of the form
\[
{a} \cdot {x} = b,
\]
with ${a} \in \mathbb{F}_q^{d+1} \setminus \{0\}$ and $b \in \mathbb{F}_q^\times$. Then
\[
\left| I(P,H) - \frac{|P| \, |H|}{q} \right|
\le |H|^{\frac{3}{4}} q^{\frac{d}{4}} |P|^{1-s}.
\]
\end{theorem}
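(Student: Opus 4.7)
The plan is to prove Theorem~\ref{nonzero} by decomposing $H$ according to the normal directions of its hyperplanes, applying one-dimensional Fourier analysis along each direction, and then combining across directions with a H\"older inequality whose exponents are tuned to activate the $(4,s)$-Salem $L^4$ bound on $\widehat{P}$. For each one-dimensional subspace $\ell \subset \mathbb{F}_q^{d+1}$ I fix a canonical nonzero vector $a_\ell \in \ell$, so every hyperplane with normal on $\ell$ takes the unique form $a_\ell \cdot x = b$ for some $b \in \mathbb{F}_q$. Setting $B_\ell := \{b : (a_\ell \cdot x = b) \in H\}$ and $A := \{\ell : B_\ell \neq \emptyset\}$, we have $|A| \le |H| = \sum_\ell |B_\ell|$, and with $E := I(P,H) - |P||H|/q$ and
\[
E_\ell := \sum_{b \in B_\ell} \bigl( |P \cap \{x : a_\ell \cdot x = b\}| - |P|/q \bigr),
\]
one obtains the decomposition $E = \sum_{\ell \in A} E_\ell$.

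For each fixed $\ell$, I apply one-dimensional Fourier inversion on $\mathbb{F}_q$ to the level-set function $b \mapsto |\{y \in P : a_\ell \cdot y = b\}|$, whose Fourier transform at $t$ equals $q^d \widehat{P}(t a_\ell)$. Subtracting the mean $|P|/q$ and summing over $b \in B_\ell$ gives
\[
E_\ell = q^{d+1} \sum_{t \neq 0} \widehat{P}(t a_\ell) \, \widehat{B_\ell}(-t),
\]
where $\widehat{B_\ell}$ denotes the Fourier transform of $\mathbf{1}_{B_\ell}$ on $\mathbb{F}_q$. Cauchy--Schwarz in $t$ together with Plancherel $\sum_t |\widehat{B_\ell}(t)|^2 = q^{-1}|B_\ell|$ yields
\[
|E_\ell|^2 \le q^{2d+1}\,|B_\ell|\,s_\ell, \qquad s_\ell := \sum_{\xi \in \ell \setminus \{0\}}|\widehat{P}(\xi)|^2,
\]
so $|E_\ell| \le q^{(2d+1)/2}|B_\ell|^{1/2}s_\ell^{1/2}$.

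The key step is to sum over $\ell$ with H\"older's inequality at the exponent pair $(4/3,4)$:
\[
\sum_\ell |B_\ell|^{1/2} s_\ell^{1/2} \le \Bigl( \sum_\ell |B_\ell|^{2/3} \Bigr)^{3/4} \Bigl( \sum_\ell s_\ell^{2} \Bigr)^{1/4}.
\]
A power-mean estimate gives $\sum_\ell |B_\ell|^{2/3} \le |A|^{1/3}|H|^{2/3}$, so the first factor is at most $|A|^{1/4}|H|^{1/2}$. For the second factor, one more Cauchy--Schwarz on each punctured line yields $s_\ell^2 \le (q-1)\sum_{\xi \in \ell \setminus \{0\}}|\widehat{P}(\xi)|^4$; since distinct lines through the origin have disjoint punctured parts, summing over $\ell$ and invoking the $(4,s)$-Salem $L^4$ bound $\sum_{\xi \neq 0}|\widehat{P}(\xi)|^4 \ll q^{-3(d+1)}|P|^{4-4s}$ (equivalent via Lemma~\ref{cor:equivalence-Salem-bound} to Definition~\ref{(4,s)-Salem-set}) produces $\sum_\ell s_\ell^2 \ll q^{-(3d+2)}|P|^{4-4s}$. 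Collecting, using the arithmetic identity $(2d+1)/2 - (3d+2)/4 = d/4$, and finally applying $|A| \le |H|$, delivers the claimed $|E| \ll q^{d/4}|H|^{3/4}|P|^{1-s}$.

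The main obstacle is the calibration of the H\"older exponents at the cross-direction step: a naive $L^2 \times L^2$ Cauchy--Schwarz over the normal lines recovers only the classical bound of Theorem~\ref{Thm-incidence-spheres} and does not engage the $(4,s)$-Salem hypothesis at all. The $(4/3,4)$ split is precisely what is required to pair a combinatorial power-mean on the offset multiplicities $|B_\ell|$ with the fourth Fourier moment of $P$, simultaneously producing the $|H|^{3/4}$ factor on the hyperplane side (through $|A|^{1/4}|H|^{1/2}$ and $|A| \le |H|$) and the Salem-weighted $|P|^{1-s}$ factor on the point side.
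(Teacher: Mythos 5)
Your proof is correct, and it takes a route that is genuinely different from the one the paper outlines for its $(u,s)$-Salem extension (Lemma~\ref{lem:incidence-us} and Theorem~\ref{thm:us-theorem41}, which mirror the cited proof from \cite{CGKPTZ25}). The paper works globally: it treats all pairs $(a,b)$ symmetrically, applies H\"older directly over $(a,b)\in\mathcal{T}$ with exponent pair $(\tfrac{4}{3},4)$, enlarges the $b$-sum to all of $\mathbb{F}_q$ so that $b$-orthogonality enforces the balanced constraint $t_1+t_2=t_3+t_4$, and then uses AM--GM plus the change of variables $m=ta$. In that framework the passage from $q^{(d+1)/4}$ to $q^{d/4}$ is obtained at the very end by a separate projective rescaling of $H$ (which is why the paper records a weaker $q^{(d+1)/4}$ when $b=0$ is allowed). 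You instead fiber over normal directions: Plancherel and Cauchy--Schwarz in the frequency variable $t$ produce the local estimate $|E_\ell|\le q^{(2d+1)/2}|B_\ell|^{1/2}s_\ell^{1/2}$, and the $(\tfrac43,4)$ H\"older is applied across lines $\ell$, paired with a power-mean on the offset multiplicities $|B_\ell|$ and a second Cauchy--Schwarz on each punctured line to reach $\sum_{\xi\neq0}|\widehat{P}(\xi)|^4$. The arithmetic $(2d+1)/2-(3d+2)/4=d/4$ is right, and the disjointness of punctured lines correctly globalizes the fourth moment.

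Two comments on what each approach buys. The paper's global version generalizes immediately to even $u=2k$ since the $b$-orthogonality mechanism is moment-agnostic; you would need to replace the intermediate Cauchy--Schwarz steps by H\"older at exponent $k$ to match. Conversely, your fibered argument keeps track of the number $|A|$ of distinct normal directions and produces the sharper intermediate bound $q^{d/4}|A|^{1/4}|H|^{1/2}|P|^{1-s}$ before the crude step $|A|\le|H|$; it also reaches $q^{d/4}$ without any rescaling and, as you note, never actually uses $b\neq0$, so it quietly improves the $q^{(d+1)/4}$ figure in the paper's Remark for the $b=0$ case. One small caveat worth making explicit: your invocation of $\sum_{\xi\neq0}|\widehat{P}(\xi)|^4\ll q^{-3(d+1)}|P|^{4-4s}$ uses the Fourier form of the Salem condition (Definition~\ref{def:us-salem} with $u=4$); the energy form in Definition~\ref{(4,s)-Salem-set} carries the extra term $|P|^4/q^{d+1}$, and the two only coincide when $|P|\ll q^{(d+1)/(4s)}$. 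That size restriction is precisely what Lemma~\ref{lem:lift-salem} guarantees when Theorem~\ref{nonzero} is applied in the proof of Theorem~\ref{thm1}, so the gap is harmless in context, but it should be acknowledged if the argument is presented standalone.
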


\begin{remark}
If $H$ is allowed to contain hyperplanes with $b=0$, the proof in \cite{CGKPTZ25} yields a slightly weaker bound,
\[
\left| I(P,H) - \frac{|P| \, |H|}{q} \right|
\le |H|^{\frac{3}{4}} q^{\frac{d+1}{4}} |P|^{1-s},
\]
which is still sufficient for our purposes below.
\end{remark}
The next lemma is a more symmetric counting version, also following from \cite{CGKPTZ25}.


\begin{lemma}[{\cite[Lemma 37]{CGKPTZ25}}]\label{lm-counting}
Let $U \subset \mathbb{F}_q^{d+1}$ be a $(4,s)$-Salem set. Let $U'$ be a finite set of points
\[
U' \subset (\mathbb{F}_q^{d+1} \setminus \{0\}) \times \mathbb{F}_q,
\]
and define $N(U,U')$ to be the number of pairs $((x),({a},b)) \in U \times U'$ such that
\[
{a} \cdot {x} = b.
\]
Then
\[
\left| N(U,U') - \frac{|U| \, |U'|}{q} \right|
\le q^{\frac{d+1}{4}} |U|^{1-s} |U'|^{\frac{3}{4}}.
\]
\end{lemma}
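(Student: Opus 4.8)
The plan is to detect the linear condition $a\cdot x=b$ with additive characters and then run a fourth--moment argument against the $L^4$ Fourier decay that the $(4,s)$--Salem hypothesis supplies. First I would expand
\[
N(U,U')=\frac1q\sum_{x\in U}\sum_{(a,b)\in U'}\sum_{t\in\mathbb{F}_q}\chi\!\bigl(t(a\cdot x-b)\bigr),
\]
separate off the $t=0$ contribution (which is exactly the main term $|U|\,|U'|/q$), and rewrite the inner sum over $x$ through the Fourier transform, using $\sum_{x\in U}\chi(ta\cdot x)=q^{\,d+1}\,\widehat U(-ta)$. This produces the clean identity
\[
N(U,U')-\frac{|U|\,|U'|}{q}=q^{d}\sum_{(a,b)\in U'}\ \sum_{t\in\mathbb{F}_q^{\times}}\widehat U(-ta)\,\chi(-tb).
\]

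Next I would apply H\"older's inequality in the $(a,b)$--summation with exponents $\tfrac43$ and $4$, obtaining
\[
\Bigl|N(U,U')-\tfrac{|U|\,|U'|}{q}\Bigr|\le q^{d}\,|U'|^{3/4}\Bigl(\,\sum_{(a,b)\in U'}\Bigl|\sum_{t\neq0}\widehat U(-ta)\,\chi(-tb)\Bigr|^{4}\,\Bigr)^{1/4}.
\]
Since $U'\subset(\mathbb{F}_q^{d+1}\setminus\{0\})\times\mathbb{F}_q$ and the summand is non-negative, I would bound the bracketed fourth moment from above by the complete sum over all $a\in\mathbb{F}_q^{d+1}\setminus\{0\}$ and all $b\in\mathbb{F}_q$. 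Expanding the fourth power and carrying out the now-complete $b$--sum collapses it to $q$ times the contribution of the quadruples $(t_1,t_2,t_3,t_4)$ with $t_1+t_2=t_3+t_4$; after the substitution $(s_1,s_2,s_3,s_4)=(-t_1,-t_2,t_3,t_4)$ and the identity $\overline{\widehat U(m)}=\widehat U(-m)$ (valid because $U$ is a set), the fourth-moment sum becomes
\[
q\sum_{a\neq0}\ \sum_{\substack{s_1,s_2,s_3,s_4\in\mathbb{F}_q^{\times}\\ s_1+s_2+s_3+s_4=0}}\widehat U(s_1a)\,\widehat U(s_2a)\,\widehat U(s_3a)\,\widehat U(s_4a).
\]
I would then pass to absolute values, use AM--GM to replace the product of the four Fourier coefficients by the average of their fourth powers, exploit the symmetry of the constraint $s_1+s_2+s_3+s_4=0$ to reduce to a single term $|\widehat U(s_1a)|^{4}$, bound the number of $(s_2,s_3,s_4)$ with prescribed sum by $q^{2}$, and observe that $(a,s_1)\mapsto s_1a$ covers each nonzero frequency exactly $q-1$ times; this shows the fourth-moment sum is $\ll q^{4}\sum_{m\neq0}|\widehat U(m)|^{4}$. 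The $(4,s)$--Salem hypothesis, in its Fourier form $\|\widehat U\|_{4}\ll q^{-(d+1)}|U|^{1-s}$ supplied by Lemma~\ref{cor:equivalence-Salem-bound}, gives $\sum_{m\neq0}|\widehat U(m)|^{4}\ll q^{-3(d+1)}|U|^{4-4s}$, so the fourth-moment sum is $\ll q^{\,1-3d}|U|^{4-4s}$. Inserting this into the H\"older estimate and simplifying $q^{d}\cdot q^{(1-3d)/4}=q^{(d+1)/4}$ yields exactly the asserted bound.

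The step needing the most care is the fourth-moment expansion: one has to track that each Fourier coefficient is sampled along the punctured line $\{ta:t\in\mathbb{F}_q^{\times}\}$ through the origin, so that the seemingly crude passage to $q^{4}\sum_{m\neq0}|\widehat U(m)|^{4}$ is in fact tight up to constants, and one must see precisely where the extra power of $q$ relative to Theorem~\ref{nonzero} enters. It enters at the completion of the $b$--sum over all of $\mathbb{F}_q$: since $b=0$ is permitted here (the hyperplanes $a\cdot x=b$ may pass through the origin), that sum cannot be restricted, and this costs the factor $q^{1/4}$ turning $q^{d/4}$ into $q^{(d+1)/4}$. Finally, the non-degeneracy $a\neq0$ is used crucially to ensure $s_1a\neq0$, which keeps the large term $|\widehat U(0)|^{4}=q^{-4(d+1)}|U|^{4}$ out of the final frequency sum.
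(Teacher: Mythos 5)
Your argument is correct, and it is essentially the paper's own method: the paper quotes this lemma from \cite{CGKPTZ25} without proof, but the even-moment argument it does give for the $(u,s)$-generalization (Lemma~\ref{lem:incidence-us}) is exactly your chain of orthogonality, H\"older with exponents $\tfrac43$ and $4$, completion of the $(a,b)$-sum, the $b$-sum producing the balanced constraint in the $t$-variables, AM--GM, the change of variables $m=ta$ with fiber size $q-1$, and the $L^4$ Fourier/Salem input, specialized to $u=4$ in $\mathbb{F}_q^{d+1}$. Your closing observations (the uncompleted $b=0$ case costing the extra $q^{1/4}$, and $a\neq 0$ keeping $\widehat{U}(0)$ out of the frequency sum) match the paper's Remark after Theorem~\ref{nonzero} and Remark~\ref{rem:parity}.
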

We now pass from $P \subset \mathbb{F}_q^d$ to a subset of $\mathbb{F}_q^{d+1}$ by lifting along the quadratic form.
\begin{lemma}\label{lem:lift-salem}
Let $P \subset \mathbb{F}_q^d$ be a $(4,s)$-Salem set 
and assume in addition that $|P| \ll q^{\frac{d}{4s}}$.
Define
\[
P' := \{ (x,\|x\|) : x\in P \} \subset \mathbb{F}_q^{d+1}.
\]
Then $P'$ is a $(4,s)$-Salem set in $\mathbb{F}_q^{d+1}$.
\end{lemma}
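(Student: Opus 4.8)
The plan is to exploit two elementary facts about the graph map $x\mapsto(x,\|x\|)$ — that it is injective and that it cannot create new additive quadruples — so that the entire argument reduces to comparing the two ``pseudorandom'' error terms $|P|^4/q^d$ and $|P'|^4/q^{d+1}$. This is precisely the point at which the hypothesis $|P|\ll q^{d/(4s)}$ is used.

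First I would record that $|P'|=|P|$, since distinct $x$ yield distinct pairs $(x,\|x\|)$. Next I would observe that if $(y_1,y_2,y_3,y_4)\in(P')^4$ satisfies $y_1+y_2=y_3+y_4$, then writing $y_i=(x_i,\|x_i\|)$ with $x_i\in P$, the first $d$ coordinates force $x_1+x_2=x_3+x_4$, so $(x_1,x_2,x_3,x_4)$ is an additive quadruple of $P$; moreover the map $(y_1,\dots,y_4)\mapsto(x_1,\dots,x_4)$ is injective. Hence
\[
\Lambda_4(P')\le\Lambda_4(P).
\]
(Equality need not hold, since an additive quadruple of $P$ lifts only when $\|x_1\|+\|x_2\|=\|x_3\|+\|x_4\|$, but the inequality is all that is needed.)

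Then, since $P$ is a $(4,s)$-Salem set in $\mathbb{F}_q^d$, Definition~\ref{(4,s)-Salem-set} gives
\[
\Lambda_4(P')\le\Lambda_4(P)\ll|P|^{4-4s}+\frac{|P|^4}{q^d}.
\]
The final step is to absorb the second term into the first: the hypothesis $|P|\ll q^{d/(4s)}$ is equivalent to $|P|^{4s}\ll q^d$, i.e.\ $|P|^4/q^d\ll|P|^{4-4s}$. Therefore
\[
\Lambda_4(P')\ll|P|^{4-4s}=|P'|^{4-4s}\le|P'|^{4-4s}+\frac{|P'|^4}{q^{d+1}},
\]
which is exactly the $(4,s)$-Salem bound in dimension $d+1$, so $P'$ is a $(4,s)$-Salem set in $\mathbb{F}_q^{d+1}$.

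There is no deep obstacle here: the only subtlety is noticing that lifting from $\mathbb{F}_q^d$ to $\mathbb{F}_q^{d+1}$ costs a full power of $q$ in the pseudorandom term (because the ambient space gained a dimension), and that the size assumption $|P|\ll q^{d/(4s)}$ is calibrated precisely to compensate for this. Without that hypothesis one could only conclude that $P'$ is $(4,s)$-Salem ``up to the larger term $|P|^4/q^d$'', which would propagate an extra factor of $q^{1/4}$ through the incidence bound of Theorem~\ref{nonzero} when it is applied to $P'$.
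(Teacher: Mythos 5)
Your proof is correct and takes essentially the same approach as the paper: project additive quadruples of $P'$ to additive quadruples of $P$ to get $\Lambda_4(P')\le\Lambda_4(P)$, then invoke the $(4,s)$-Salem bound on $P$. You are in fact slightly more careful than the paper's own write-up, which jumps from ``$P$ is $(4,s)$-Salem'' directly to $\Lambda_4(P')\ll|P|^{4-4s}$ without spelling out that the size hypothesis $|P|\ll q^{d/(4s)}$ is exactly what absorbs the $|P|^4/q^d$ term; your explicit intermediate step makes that dependence visible. One small quibble with your closing commentary: the lifting does not ``cost'' a power of $q$ in the pseudorandom term --- $|P'|^4/q^{d+1}$ is actually smaller than $|P|^4/q^d$ --- the real issue is simply that the $(4,s)$-Salem bound you inherit from $P$ carries the larger term $|P|^4/q^d$, which cannot be compared directly to $|P'|^4/q^{d+1}$, so the size hypothesis is used to eliminate it altogether.
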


\begin{proof}
By definition,
\[
\Lambda_4(P') = \bigl|\{(x_1,x_2,x_3,x_4) \in P^4 :
(x_1,\|x_1\|) + (x_2,\|x_2\|) = (x_3,\|x_3\|) + (x_4,\|x_4\|) \}\bigr|.
\]
The equality
\[
(x_1,\|x_1\|) + (x_2,\|x_2\|) = (x_3,\|x_3\|)
+ (x_4,\|x_4\|)
\]
is equivalent to the system
\[
x_1 + x_2 = x_3 + x_4, \qquad
\|x_1\| + \|x_2\| = \|x_3\| + \|x_4\|.
\]
Hence any such quadruple in $P'$ is, in particular, an additive energy quadruple of $P$. Therefore,
\[
\Lambda_4(P') \le \Lambda_4(P).
\]
Since $P$ is $(4,s)$-Salem, it follows that
\[
\Lambda_4(P') \ll |P|^{4-4s} = |P'|^{4-4s}.
\]
This shows that $P'$ is also $(4,s)$-Salem.
\end{proof}


\subsection*{Reduction to point-hyperplane incidences}
Let $S$ be a set of spheres of the form
\[
\|x - a'\| = r, \qquad a'\in \mathbb{F}_q^d,\ r \in \mathbb{F}_q.
\]
For each $(a',r)$, consider the corresponding affine equation
\[
\|x- a'\| = r.
\]
Expanding the quadratic form yields
\[
\|x\| - 2 x \cdot a' + \|a'\| = r,
\]
or equivalently
\[
-2 x\cdot a' + \|x\| = r - \|a'\|.
\]
If we write points of $\mathbb{F}_q^{d+1}$ as $(x,t)$ with $x \in \mathbb{F}_q^d$ and $t \in \mathbb{F}_q$, then the last equation becomes
\[
(-2a',1) \cdot (x,t) = r - \|a'\|,
\]
where the dot product is taken in $\mathbb{F}_q^{d+1}$. Thus each sphere $\|x-a'\|=r$ corresponds to a hyperplane
\[
H_{a',r} := \{(x,t) \in \mathbb{F}_q^{d+1} : (-2a',1) \cdot (x,t) = r - \|a'\|\}.
\]
The incidence relation
\[
x \in P,\quad \|x-a'\| = r
\]
is equivalent to
\[
(x,\|x\|) \in P',\quad (x,\|x\|) \in H_{a',r}.
\]
Hence if we set
\[
H := \{ H_{a',r} : \|x-a'\| = r \in S \},
\]
then
\[
I(P,S) = I(P',H).
\]
We now split $S$ according to whether the right-hand side $r - \|a'\|$ vanishes or not. Define
\[
S_1 := \{ \|x -a'\| = r \in S : \|a'\| - r \neq 0 \},
\qquad
S_2 := S \setminus S_1.
\]
Let $H_1$ and $H_2$ be the corresponding subsets of $H$:
\[
H_1 := \{H_{a',r} : \|x -a'\| = r \in S_1\},
\qquad
H_2 := \{H_{a',r} : \|x -a'\|= r \in S_2\}.
\]
By construction, for $H_1$ we have
\[
(-2a',1) \cdot (x,t) = b, \quad b := r - \|a'\| \in \mathbb{F}_q^\times,
\]
so all hyperplanes in $H_1$ have nonzero right-hand side. For $H_2$ we have $b = 0$.

We clearly have
\[
I(P,S) = I(P,S_1) + I(P,S_2) = I(P',H_1) + I(P',H_2).
\]


We treat the two cases separately, according to whether the associated hyperplanes have zero or nonzero constant term.

We first consider the spheres in $S_1$ which satisfy $\|a'\|-r \neq 0$.
By Lemma \ref{lem:lift-salem}, the lifted set $P'$ is a $(4,s)$-Salem subset of $\mathbb{F}_q^{d+1}$. Since every hyperplane in $H_1$ has nonzero right-hand side, we may apply Theorem \ref{nonzero} with $P$ replaced by $P'$ and $H$ replaced by $H_1$ to obtain
\[
\left| I(P',H_1) - \frac{|P'| \, |H_1|}{q} \right|
\le q^{\frac{d}{4}} |P'|^{1-s}  |H_1|^{\frac{3}{4}}.
\]
Because $|P'|=|P|$ and $|H_1| = |S_1|$, this yields
\[
\left| I(P,S_1) - \frac{|P| \, |S_1|}{q} \right|
\le q^{\frac{d}{4}} |P|^{1-s}  |S_1|^{\frac{3}{4}}.
\]



Now consider the spheres in $S_2$, which satisfy $\|a'\| = r$. For such a sphere we have
\[
\|x -a'\| = r \quad\Longleftrightarrow\quad -2 x \cdot a' + \|x\| = 0.
\]
Equivalently,
\[
(x,\|x\|) \cdot (-2a',1) = 0.
\]
We will apply Lemma \ref{lm-counting} to a suitable choice of $U$ and $U'$. Let
\[
U := P' = \{(x,\|x\|) : x \in P\} \subset \mathbb{F}_q^{d+1}.
\]
For each sphere $\|x-a'\| = r \in S_2$ and each $\lambda \in \mathbb{F}_q^\times$, consider the vector
\[
{a}_{a',\lambda} := (-2\lambda a',\lambda) \in \mathbb{F}_q^{d+1},
\]
and set
\[
U' := \{ ({a}_{a',\lambda},0) : \|x-a'\| = r \in S_2,\ \lambda \in \mathbb{F}_q^\times\}.
\]
Since $\lambda \neq 0$ and $a$ is arbitrary, each ${a}_{a',\lambda}$ is nonzero, so indeed
\[
U' \subset (\mathbb{F}_q^{d+1} \setminus \{0\}) \times \mathbb{F}_q.
\]
Moreover,
\[
|U'| = (q-1)|S_2|.
\]
An ordered pair $((x,\|x\|),({a}_{a',\lambda},0)) \in U \times U'$ satisfies
\[
{a}_{a',\lambda} \cdot (x,\|x\|) = 0
\]
if and only if
\[
(-2\lambda a',\lambda) \cdot (x,\|x\|) = 0
\quad\Longleftrightarrow\quad
\lambda\,(-2 x \cdot a' + \|x\|) = 0.
\]
Since $\lambda \neq 0$, this is equivalent to
\[
-2 x \cdot a' + \|x\| = 0,
\]
which, as observed, is the condition that $x$ lies on the sphere $\|x-a'\| = r$ with $\|a'\| = r$.

Thus every incidence $(x,\|x-a'\| = r) \in P \times S_2$ gives rise to exactly $q-1$ pairs $((x,\|x\|),({a}_{a',\lambda},0))$ with $\lambda \in \mathbb{F}_q^\times$, and conversely every such pair comes from a unique incidence. Therefore
\[
N(U,U') = (q-1)\, I(P,S_2).
\]
We now apply Lemma \ref{lm-counting} to $U$ and $U'$. We obtain
\[
\left| N(U,U') - \frac{|U| \, |U'|}{q} \right|
\le |U'|^{\frac{3}{4}} q^{\frac{d+1}{4}} |U|^{1-s}.
\]
Substituting $|U| = |P|$ and $|U'| = (q-1)|S_2|$, and using $N(U,U') = (q-1) I(P,S_2)$, we get
\[
\left| (q-1) I(P,S_2) - \frac{|P|(q-1)|S_2|}{q} \right|
\le (q-1)^{\frac{3}{4}} |S_2|^{\frac{3}{4}} q^{\frac{d+1}{4}} |P|^{1-s}.
\]
Dividing both sides by $q-1$ yields
\[
\left| I(P,S_2) - \frac{|P| \, |S_2|}{q} \right|
\le (q-1)^{-\frac{1}{4}} |S_2|^{\frac{3}{4}} q^{\frac{d+1}{4}} |P|^{1-s} \ll q^{\frac{d}{4}} |S_2|^{\frac{3}{4}} |P|^{1-s}.
\]



Combining the bounds for $S_1$ and $S_2$, we have
\[
\begin{aligned}
\left| I(P,S) - \frac{|P| \, |S|}{q} \right|
&= \left| \left(I(P,S_1) - \frac{|P| \, |S_1|}{q}\right)
    + \left(I(P,S_2) - \frac{|P| \, |S_2|}{q}\right) \right| \\
&\le \left| I(P,S_1) - \frac{|P| \, |S_1|}{q} \right|
     + \left| I(P,S_2) - \frac{|P| \, |S_2|}{q} \right| \\
&\ll q^{\frac{d}{4}} |P|^{1-s}
   \left( |S_1|^{\frac{3}{4}} + |S_2|^{\frac{3}{4}} \right).
\end{aligned}
\]
Since $|S_1|, |S_2| \le |S|$, we obtain
\[
\left| I(P,S) - \frac{|P| \, |S|}{q} \right|
\ll q^{\frac{d}{4}} |P|^{1-s} |S|^{\frac{3}{4}}.
\]
This completes the proof of Theorem \ref{thm1}.

\section{Sharpness example}\label{sec:sharpness}

We show that the exponents in Theorem~\ref{thm1} are sharp (up to constants) for $s \in \big( \frac{1}{4}, \frac{1}{2} \big]$. The example exploits a large totally isotropic subspace $W\le \mathbb{F}_q^d$ for the quadratic
form $\|x\|$. We choose a subset $P\subset W$ with controlled additive energy, guaranteeing the $(4,s)$-Salem condition, and take $S$ to be the family of zero-radius spheres centered at points of $W$. Since $\|x-a\|=0$ for all $x,a\in W$, every point is incident to every sphere, giving $I(P,S)=|P||S|$ and matching the error term.


\subsection{Sharpness examples of the incidence theorem}
Throughout this section, assume that $q$ is odd.
Recall that $\|x\|:=x_1^2+\cdots+x_d^2$ and $x\cdot y:=\sum\limits_{j=1}^d x_jy_j$.

\begin{lemma}\label{lem:exist-isotropic}
Assume that $q$ is odd and that either $(d=4k$, $k \in \mathbb{N})$ or $(d=4k+2$, $k \in \mathbb{N}, ~q \equiv 1 \pmod {4})$.
Then there exists a subspace $W\le \mathbb{F}_q^{\,d}$ of dimension $\frac d2$ such that
\[
\|w\|=0 \quad \text{for all } w\in W,
\qquad\text{and}\qquad
x\cdot y=0 \quad \text{for all } x,y\in W.
\]
In particular, $W$ is totally isotropic for the quadratic form $\|\cdot\|$.
\end{lemma}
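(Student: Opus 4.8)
The plan is to construct $W$ explicitly as an orthogonal direct sum of totally isotropic pieces supported on disjoint blocks of coordinates, using the elementary fact that over a finite field of odd characteristic every scalar — in particular $-1$ — is a sum of two squares. The starting point is a two-by-two-variable building block: since $q$ is odd, the set $\{a^2 : a\in\mathbb{F}_q\}$ and the set $\{-1-b^2 : b\in\mathbb{F}_q\}$ each have $\frac{q+1}{2}$ elements, so by pigeonhole they overlap, giving $a,b\in\mathbb{F}_q$ with $a^2+b^2=-1$.

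From such a pair I build, inside any block of four coordinates, the vectors $v_1=(a,b,1,0)$ and $v_2=(-b,a,0,1)$. A direct check gives $\|v_1\|=\|v_2\|=a^2+b^2+1=0$, $v_1\cdot v_2=-ab+ab=0$, and $v_1,v_2$ are independent (their last two coordinates are the standard basis of $\mathbb{F}_q^2$). Since every combination has vanishing norm, $\|sv_1+tv_2\|=s^2\|v_1\|+2st\,(v_1\cdot v_2)+t^2\|v_2\|=0$, so $\langle v_1,v_2\rangle$ is a $2$-dimensional totally isotropic subspace of $\mathbb{F}_q^4$. I then assemble $W$ by cases. If $d=4k$, partition the coordinates into $k$ blocks of size $4$ and let $W$ be the internal direct sum of the above $2$-dimensional isotropic subspace inside each block; because $\|\cdot\|$ and its polar bilinear form $(x,y)\mapsto 2\,x\cdot y$ split as orthogonal sums over disjoint coordinate blocks, vectors from distinct blocks are automatically orthogonal and isotropic, so $W$ is totally isotropic of dimension $2k=d/2$. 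If $d=4k+2$ and $q\equiv1\pmod4$, then $-1$ is a square, say $-1=i^2$, so on the two leftover coordinates the line spanned by $(1,i)$ is isotropic since $1+i^2=0$; adjoining this line to the direct sum over the first $k$ blocks of four yields a totally isotropic $W$ of dimension $2k+1=d/2$.

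The "in particular" clause is then automatic: if $\|w\|=0$ for all $w\in W$, then for $x,y\in W$ we have $x+y\in W$, so $0=\|x+y\|=\|x\|+\|y\|+2\,x\cdot y=2\,x\cdot y$, and since $q$ is odd this forces $x\cdot y=0$. I do not expect a serious obstacle here; the only points requiring care are the dimension bookkeeping across the two congruence classes of $d$ and the verification of orthogonality both within and across blocks. It is worth recording why the hypothesis $q\equiv1\pmod4$ is genuinely needed when $d\equiv2\pmod4$: the leftover $2$-dimensional block carries the binary form $x^2+y^2$, which is anisotropic precisely when $-1$ is a non-square, i.e. when $q\equiv3\pmod4$; in that regime the classification of quadratic forms over finite fields shows the Witt index of $\|\cdot\|$ on $\mathbb{F}_q^d$ is only $\tfrac d2-1$, so no totally isotropic subspace of dimension $\tfrac d2$ can exist — the obstruction is exactly that one extra plane.
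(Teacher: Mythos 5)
Your construction is correct and is essentially the same as the paper's: both build $W$ from totally isotropic planes supported on disjoint coordinate blocks, using $a^2+b^2=-1$ for blocks of four and a $\sqrt{-1}$-line $(1,i)$ for blocks of two (your block vectors $(a,b,1,0)$, $(-b,a,0,1)$ are the paper's $(1,0,a,b)$, $(0,1,b,-a)$ up to a permutation of coordinates). The only organizational difference is that you split cases by $d\bmod 4$ while the paper splits by $q\bmod 4$; both decompositions cover all the hypotheses, and your closing observation that the Witt index obstruction makes the condition $q\equiv 1\pmod 4$ genuinely necessary when $d\equiv 2\pmod 4$ is a nice justification the paper does not record.
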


\begin{proof}
We begin by following part of the proof of Lemma 5.1 from \cite{HIKR11}, which holds under our assumptions on $q$ and $d$.

We first consider the case $q \equiv 1 \pmod 4$. Then, there is an element of order 4 in the multiplicative group, $\sqrt{-1},$ and so there exist $\frac d2$ vectors
$v_1,\dots,v_{d/2}\in \mathbb{F}_q^{\,d}$ where $v_\ell$ has 1 for the entry with index $2\mathrm{i}-1$ and $\sqrt{-1}$ for the entry at index $2\mathrm{i}$. Therefore,
\[
\|v_\ell \|=0 \quad \text{for every } \ell,
\qquad\text{and}\qquad
v_\ell\cdot v_j=0 \quad \text{for all } \ell\neq j.
\]
Define
\[
W:=\mathrm{span}_{\mathbb{F}_q}(v_1,\dots,v_{d/2}).
\]
Since the vectors $v_1,\dots,v_{d/2}$ are linearly independent, we have $\dim W=\frac d2$.

Let $w=\sum\limits_{\ell=1}^{d/2} c_\ell v_\ell\in W$.
Using $v_\ell\cdot v_j=0$ for $i\neq j$ and $\|v_\ell\|=v_\ell\cdot v_\ell=0$, we obtain
\[
\|w\|
=
w\cdot w
=
\sum_{\ell,j} c_\ell c_j (v_\ell\cdot v_j)
=
\sum_{\ell=1}^{d/2} c_\ell^2 (v_\ell \cdot v_\ell)
=
0.
\]
Similarly, if $x=\sum\limits_\ell a_\ell v_\ell$ and $y=\sum\limits_j b_j v_j$ belong to $W$, then
\[
x\cdot y
=
\sum_{\ell,j} a_\ell b_j (v_\ell\cdot v_j)
=
0.
\]
This proves the case $q \equiv 1 \pmod 4$.

We now consider the case $d=4k$ and $q\equiv 3 \pmod 4$.
Since $q$ is odd, the equation
\[
a^2+b^2=-1
\]
admits a solution $(a,b)\in\mathbb F_q^2$.
Fix such a pair $(a,b)$.

We decompose the coordinate indices $\{1,\dots,d\}$ into $k$ disjoint blocks of size $4$.
On each block, we define two vectors supported only on the corresponding four coordinates.
More precisely, for $j=0,1,\dots,k-1$, define
\[
v_{2j+1}
=
(\underbrace{0,\dots,0}_{4j},
1,0,a,b,
\underbrace{0,\dots,0}_{d-4j-4}),
\]
\[
v_{2j+2}
=
(\underbrace{0,\dots,0}_{4j},
0,1,b,-a,
\underbrace{0,\dots,0}_{d-4j-4}).
\]
A direct computation shows that each such vector is isotropic:
\[
\|v_\ell\| = 1^2 + a^2 + b^2 = 0.
\]
Moreover, within each block we have
\[
v_{2j+1}\cdot v_{2j+2}=0,
\]
while vectors supported on different blocks are automatically orthogonal with respect to the dot product.
Hence
\[
v_\ell\cdot v_{\ell'}=0
\quad \text{ for all } 1\le \ell, \ell'\le \tfrac d2.
\]

Let
\[
W:=\mathrm{span}_{\mathbb F_q}\{v_1,\dots,v_{d/2}\}.
\]
The vectors $v_1,\dots,v_{d/2}$ are linearly independent, so
$\dim W=d/2$.
Since the dot product vanishes identically on $W$, every vector $w\in W$ satisfies
\[
\|w\|=w\cdot w=0.
\]
Therefore, $W$ is a totally isotropic subspace of dimension $\frac{d}{2}$, which completes the proof in the case $d=4k$ and $q\equiv3\pmod4$.

\end{proof}


\begin{lemma}\label{lem:salem-subset-W}
Assume that either $(d=4k$, $k \in \mathbb{N})$ or $(d=4k+2$, $k \in \mathbb{N}, ~q \equiv 1 \pmod {4})$. Let $W\le \mathbb{F}_q^{\,d}$ be a subspace as in Lemma~\ref{lem:exist-isotropic}.
Write $M:=|W|=q^{\frac d2}$.
Fix $s\in(\frac14,\frac12]$ and set
\[
N:=\left\lfloor q^{\frac{d}{8s}}\right\rfloor. 
\]
Then there exists a set $P\subset W$ with $|P|=N$ such that
\[
\Lambda_4(P)\le C\,N^{4-4s},
\]
where $C>0$ is an absolute constant.
In particular, $P$ is a $(4,s)$-Salem set in the sense of Definition~\ref{(4,s)-Salem-set}.
\end{lemma}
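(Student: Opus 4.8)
The plan is to construct $P$ by the probabilistic method rather than explicitly. The key arithmetic point, which makes all the exponents balance, is that $N^{4s}=M$: indeed $M=q^{d/2}$ and $N=q^{d/(8s)}$, so $N^{4s}=q^{d/2}=M$, equivalently $N^{4}/M=N^{4-4s}$; moreover $s\le\tfrac12$ forces $4-4s\ge2$, hence $N^{2}\le N^{4-4s}$ as well. (We may assume $N$ is a large integer — replacing $q^{d/(8s)}$ by its floor changes the final constant only — and that $q$ is large in terms of $s$, which is harmless for an asymptotic sharpness example.)

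First I would let $P_{0}\subseteq W$ be the random subset in which each point of $W$ is included independently with probability $p:=2N/M$; note $p<1$ once $q$ is large, since $N/M=q^{d/(8s)-d/2}\to 0$ as $q\to\infty$ because $s>\tfrac14$. Then $\mathbb{E}\,|P_{0}|=pM=2N$, and Chebyshev's inequality gives $\Pr\!\big[\,|P_{0}|<N\,\big]\le \mathrm{Var}(|P_{0}|)/N^{2}\le pM/N^{2}=2/N<\tfrac1{10}$ for $q$ large. The heart of the argument is the bound $\mathbb{E}\,[\Lambda_{4}(P_{0})]\ll N^{4-4s}$, which I would prove by sorting the additive quadruples $(x_{1},x_{2},x_{3},x_{4})\in W^{4}$ with $x_{1}+x_{2}=x_{3}+x_{4}$ by their number of distinct entries. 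A short check — using that $q$ is odd, so one may divide by $2$ — shows that such a quadruple is either \emph{trivial}, meaning $\{x_{1},x_{2}\}=\{x_{3},x_{4}\}$ as multisets, or else has at least three distinct entries and satisfies $\{x_{1},x_{2}\}\cap\{x_{3},x_{4}\}=\emptyset$. Trivial quadruples realized in $P_{0}$ number at most $2|P_{0}|^{2}$, contributing $\le 2\,\mathbb{E}\,|P_{0}|^{2}\ll N^{2}$ in expectation. A nontrivial quadruple with exactly three distinct entries has $x_{1}=x_{2}$ or $x_{3}=x_{4}$, so is determined by two of its entries; there are at most $2M^{2}$ of these, contributing $\le 2M^{2}p^{3}=16\,N^{3-4s}\le16\,N^{4-4s}$. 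A nontrivial quadruple with four distinct entries is one of at most $M^{3}$ (choose $x_{1},x_{2},x_{3}$, then $x_{4}$ is forced), contributing $\le M^{3}p^{4}=16\,N^{4}/M=16\,N^{4-4s}$. Summing the three contributions and using $N^{2}\le N^{4-4s}$ gives $\mathbb{E}\,[\Lambda_{4}(P_{0})]\ll N^{4-4s}$.

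By Markov's inequality there is an absolute constant $C_{0}$ with $\Pr\!\big[\Lambda_{4}(P_{0})>C_{0}N^{4-4s}\big]<\tfrac1{10}$, so with probability exceeding $\tfrac45$ there is a realization $P_{0}$ satisfying both $|P_{0}|\ge N$ and $\Lambda_{4}(P_{0})\le C_{0}N^{4-4s}$. I would then fix such a $P_{0}$ and let $P\subseteq P_{0}$ be any subset with $|P|=N$. Because $\Lambda_{4}$ is monotone under passing to subsets ($P^{4}\subseteq P_{0}^{4}$), we get $\Lambda_{4}(P)\le\Lambda_{4}(P_{0})\le C_{0}N^{4-4s}$ with $C_{0}$ absolute, and since $P\subseteq W\le\mathbb{F}_{q}^{d}$ this exhibits $P$ as a $(4,s)$-Salem set in the sense of Definition~\ref{(4,s)-Salem-set} (the extra $|P|^{4}/q^{d}$ allowed there only helps).

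The step I expect to be the main obstacle is the quadruple classification together with the correct weight assigned to each class. The four-distinct-point quadruples must carry weight $p^{4}$: the cruder estimate $M^{3}p^{3}\asymp N^{3}$ would be fatal when $s>\tfrac14$, whereas $M^{3}p^{4}\asymp N^{4-4s}$ is exactly the desired order. Dually, one must observe that nontrivial quadruples with a repeated entry are negligible because there are only $O(M^{2})$ of them, which is where the hypothesis that $q$ is odd enters. Everything after that is bookkeeping with absolute constants, and the balance $N^{4s}=M$ combined with $s\le\tfrac12$ is precisely what makes the main term $N^{4-4s}$ dominate the trivial contribution of order $N^{2}$.
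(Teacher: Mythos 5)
Your proof is correct and follows essentially the same probabilistic strategy as the paper: classify additive quadruples by the number of distinct entries, bound each class's expected contribution, and conclude by the probabilistic method, with the decisive numerology being $N^{4s}=M$ so that the four-distinct-entry term $M^3p^4\asymp N^4/M=N^{4-4s}$ dominates once $s\le\tfrac12$. The technical differences are modest. You use the independent-inclusion (Bernoulli) model with $p=2N/M$, which forces an extra Chebyshev--Markov--union-bound step to find a realization with $|P_0|\ge N$ and small energy simultaneously, followed by a trim to exactly $N$ elements using the monotonicity of $\Lambda_4$ under subsets; the paper instead samples a uniformly random $N$-element subset (hypergeometric model), which hands over $|P|=N$ for free and needs only linearity of expectation. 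On the other hand, your quadruple classification is in fact slightly more careful than the paper's: the paper's case $2\le k\le 3$ asserts that $k\le 3$ forces two of $x_1,x_2,x_3$ to coincide, which overlooks the subcase $x_4=x_3$ with $x_1,x_2,x_3$ all distinct (equivalently $2x_3=x_1+x_2$, possible since $q$ is odd); that subcase contributes a further $O(M^2)$ triples, so the paper's final bound is unaffected apart from the constant, but your dichotomy (trivial, $x_1=x_2$ or $x_3=x_4$, or four distinct) handles it cleanly. Both arguments deliver the lemma with an absolute constant.
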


\begin{proof}
Let $M:=|W|$ and let $N$ be as in the statement. 
Choose $P$ uniformly at random among all $N$-element subsets of $W$.
For $x\in W$, write ${\bf 1}_P(x)$ for the indicator that $x\in P$.

For $x_1,x_2,x_3\in W$, set $x_4:=x_1+x_2-x_3\in W$. Then
\begin{equation}\label{eq:Lambda4-expand}
\Lambda_4(P)
=\sum_{x_1,x_2,x_3\in W}
{\bf 1}_P(x_1)\,{\bf 1}_P(x_2)\,{\bf 1}_P(x_3)\,{\bf 1}_P(x_4).
\end{equation}
Taking expectation and using linearity of expectation gives
\begin{equation}\label{eq:ELambda4}
\mathbb{E}\,\Lambda_4(P)
=\sum_{x_1,x_2,x_3\in W}
\mathbb{P}\bigl(\{x_1,x_2,x_3,x_4\}\subset P\bigr).
\end{equation}

Fix $(x_1,x_2,x_3)\in W^3$ and let 
\[
k:=\bigl|\{x_1,x_2,x_3,x_4\}\bigr|
\]
be the number of distinct elements among these four points. 
Since $P$ is a uniformly random $N$-subset of a set of size $M$, we have
\[
\mathbb{P}\bigl(\{x_1,x_2,x_3,x_4\}\subset P\bigr)
=\frac{(N)_k}{(M)_k}
\le \Bigl(\frac{N}{M}\Bigr)^k,
\]
where $(u)_k=u(u-1)\cdots(u-k+1)$ is the falling factorial.

Now we bound the number of triples according to whether $k=4$ or $k\le 3$.

\smallskip
\noindent
\emph{(i) The case $k=1$.}
This happens iff $x_1=x_2=x_3$, in which case also $x_4=x_1$. 
So, there are exactly $M$ such triples, and for each,
\[
\mathbb{P}(\{x_1\}\subset P)=\frac{N}{M}.
\]
Hence, the total contribution of $k=1$ triples to \eqref{eq:ELambda4} is at most $M\cdot (N/M)=N$.

\smallskip
\noindent
\emph{(ii) The case $2\le k\le 3$.}
If $k\le 3$, then at least two of $x_1,x_2,x_3$ coincide.
Indeed, if $x_4$ coincides with one of $x_1,x_2,x_3$, then one checks that this forces
$x_1=x_3$ or $x_2=x_3$ or $x_1=x_2$.
In any case, $k\le 3$ implies
\[
x_1=x_2 \quad\text{or}\quad x_1=x_3 \quad\text{or}\quad x_2=x_3.
\]
Each of these equalities describes at most $M^2$ triples, so the total number of triples with $2\le k\le 3$
is at most $3M^2$. For these triples we use the crude bound $k\ge 2$, hence
\[
\mathbb{P}(\{x_1,x_2,x_3,x_4\}\subset P)\le \Bigl(\frac{N}{M}\Bigr)^2,
\]
so their total contribution is at most $3M^2\cdot (N/M)^2=3N^2$.

\smallskip
\noindent
\emph{(iii) The case $k=4$.}
There are at most $M^3$ triples $(x_1,x_2,x_3)$ in total, and for each such triple
\[
\mathbb{P}(\{x_1,x_2,x_3,x_4\}\subset P)\le \Bigl(\frac{N}{M}\Bigr)^4.
\]
So the total contribution of the $k=4$ case is at most
\[
M^3\Bigl(\frac{N}{M}\Bigr)^4=\frac{N^4}{M}.
\]

Combining (i)–(iii) in \eqref{eq:ELambda4} gives
\[
\mathbb{E}\,\Lambda_4(P)\ \le\ \frac{N^4}{M}+3N^2+N
\ \le\ \frac{N^4}{M}+4N^2.
\]
Since $M=q^{d/2}$ and $N=q^{d/(8s)}$, we have $M=N^{4s}$, hence
\[
\frac{N^4}{M}=N^{4-4s}.
\]
Because $s\le \frac12$, we have $4-4s\ge 2$, and therefore, $N^2\le N^{4-4s}$ for $N\ge 1$.
Thus,
\[
\mathbb{E}\,\Lambda_4(P)\ \le\ 5\,N^{4-4s}.
\]
In particular, there exists at least one $N$-element subset $P\subset W$ such that
\[
\Lambda_4(P)\le 5\,N^{4-4s}.
\]
This proves the lemma (with an absolute constant $C=5$).
\end{proof}

\begin{proposition}\label{prop:sharpness-general-s}
Assume that either $(d=4k$, $k \in \mathbb{N})$ or $(d=4k+2$, $k \in \mathbb{N}, ~q \equiv 1 \pmod {4})$.
Fix $s\in(\frac14,\frac12]$ and set
\[
N:=\left\lfloor q^{\frac{d}{8s}}\right\rfloor. 
\]
Then there exist a set of points $P\subset \mathbb{F}_q^{\,d}$ and a set of spheres $S$ of the form
$\|x-a\|=0$ such that:
\begin{enumerate}
\item[(i)] $|P|=N$, $|S|=q^{\frac d2}$, and $P$ is a $(4,s)$-Salem set in the sense of
Definition~\ref{(4,s)-Salem-set};
\item[(ii)] $I(P,S)=|P|\,|S|$;
\item[(iii)] consequently,
\[
\left|I(P,S)-\frac{|P|\,|S|}{q}\right|
\sim
q^{\frac{d}{4}}\,|P|^{1-s}\,|S|^{\frac{3}{4}},
\]
so the exponents of $q$, $|P|$, and $|S|$ in the error term of Theorem~\ref{thm1}
are best possible, up to absolute constants, for this range of $s$.
\end{enumerate}
\end{proposition}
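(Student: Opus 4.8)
The plan is to assemble the ingredients already prepared. Lemma~\ref{lem:exist-isotropic} provides, under the stated hypotheses on $d$ and $q$, a totally isotropic subspace $W\le\mathbb{F}_q^{\,d}$ with $\dim W=d/2$, hence $|W|=q^{d/2}$. Lemma~\ref{lem:salem-subset-W} then extracts inside $W$ a set $P$ with $|P|=N:=q^{d/(8s)}$ and $\Lambda_4(P)\le C\,N^{4-4s}$, which is the $(4,s)$-Salem condition of Definition~\ref{(4,s)-Salem-set} (the extra summand $|P|^4/q^d$ in that definition only helps). Before using these I would record the two size inequalities that make everything legitimate: since $s\ge\frac14$ we have $N=q^{d/(8s)}\le q^{d/2}=|W|$, so an $N$-element subset of $W$ exists; and since $d/(8s)<d/(4s)$ we have $|P|=N\ll q^{d/(4s)}$, so $P$ satisfies the hypothesis of Theorem~\ref{thm1}. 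This gives part (i) once we also fix $S$.

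For the geometric heart, I would set $S:=\{\sigma_a:a\in W\}$ where $\sigma_a=\{x\in\mathbb{F}_q^{\,d}:\|x-a\|=0\}$ is the zero-radius sphere centered at $a$, so that $|S|=|W|=q^{d/2}$. Expanding $\|x-a\|=\|x\|-2\,x\cdot y\big|_{y=a}+\|a\|$ and using that $W$ is totally isotropic, for every $x\in P\subset W$ and every $a\in W$ we get $\|x\|=\|a\|=x\cdot a=0$, hence $\|x-a\|=0$. Thus every point of $P$ lies on every sphere of $S$, which yields $I(P,S)=|P|\,|S|=N\,q^{d/2}$, i.e.\ part (ii).

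For part (iii) it remains to compare two explicit quantities. The main term is $|P|\,|S|/q=N\,q^{d/2-1}$, so $|I(P,S)-|P||S|/q|=N\,q^{d/2}(1-q^{-1})\sim N\,q^{d/2}$. On the other side, using $|S|=q^{d/2}$ and $N^{s}=q^{d/8}$, the error term of Theorem~\ref{thm1} is $q^{d/4}|P|^{1-s}|S|^{3/4}=q^{d/4}\,N^{1-s}\,q^{3d/8}=N^{1-s}q^{5d/8}=N^{1-s}\bigl(N^{s}q^{d/2}\bigr)=N\,q^{d/2}$, since $q^{d/8}\cdot q^{d/2}=q^{5d/8}$ and $q^{d/4}\cdot q^{3d/8}=q^{5d/8}$. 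Hence the actual deviation and the error bound coincide up to the factor $1-q^{-1}$, proving the asserted $\sim$; as $q\to\infty$ this rules out lowering any one of the exponents $\tfrac d4$, $1-s$, $\tfrac34$ while keeping the other two fixed, so they are best possible in this range of $s$.

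There is no substantive obstacle: the proposition is a verification that the construction underlying Lemmas~\ref{lem:exist-isotropic} and \ref{lem:salem-subset-W} behaves as advertised. The only delicate point is the exponent bookkeeping in part (iii) together with the observation that the choice $N=q^{d/(8s)}$ is exactly the one forcing $|P|\,|S|$ to equal $q^{d/4}|P|^{1-s}|S|^{3/4}$ — this is the reason that specific exponent is built into Lemma~\ref{lem:salem-subset-W}. One should also flag that the example inherits the case restrictions on $(d,q)$ from Lemma~\ref{lem:exist-isotropic}, which is why Proposition~\ref{prop:sharpness-general-s} carries the same hypotheses.
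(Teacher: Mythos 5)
Your proposal is correct and follows exactly the same route as the paper: take $W$ from Lemma~\ref{lem:exist-isotropic}, extract $P\subset W$ via Lemma~\ref{lem:salem-subset-W}, take $S$ to be the zero-radius spheres centered at $W$, observe full incidence, and match exponents. The only cosmetic difference is that you expand $\|x-a\|$ coordinatewise, whereas the paper notes $x-a\in W$ and invokes isotropy directly; both are the same observation.
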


\begin{proof}
Let $W$ be as in Lemma~\ref{lem:exist-isotropic}.
By Lemma~\ref{lem:salem-subset-W}, there exists $P\subset W$ with $|P|=N$ and
\[
\Lambda_4(P)\ll N^{4-4s}.
\]
Thus, $P$ is a $(4,s)$-Salem set.

Define
\[
S:=\{\sigma_a:\ a\in W\},
\qquad
\sigma_a:=\{x\in\mathbb{F}_q^{\,d}:\ \|x-a\|=0\}.
\]
Then $|S|=|W|=q^{\frac d2}$.

Let $x\in P\subset W$ and $a\in W$.
Then $x-a\in W$, so $\|x-a\|=0$ by Lemma~\ref{lem:exist-isotropic}.
Thus, $x\in \sigma_a$ for every $x\in P$ and every $a\in W$.
Therefore,
\[
I(P,S)=|P|\,|S|.
\]

Since $|S|=q^{\frac d2}$, we have
\[
\left|I(P,S)-\frac{|P|\,|S|}{q}\right|
=
|P|\,|S|\left(1-\frac1q\right)
\sim
|P|\,|S|
=
N\,q^{\frac d2}.
\]
On the other hand,
\[
q^{\frac d4}\,|P|^{1-s}\,|S|^{\frac34}
=
q^{\frac d4}\,N^{1-s}\,\bigl(q^{\frac d2}\bigr)^{\frac34}
=
q^{\frac{5d}{8}}\,N^{1-s}.
\]
Since $N=q^{\frac{d}{8s}}$, we have $N^{s}=q^{\frac d8}$, and hence
\[
q^{\frac{5d}{8}}\,N^{1-s}
=
q^{\frac{5d}{8}}\frac{N}{N^{s}}
=
q^{\frac{5d}{8}}\frac{N}{q^{\frac d8}}
=
N\,q^{\frac d2}.
\]
Thus, the two quantities are comparable, which proves (iii).
\end{proof}


\subsection{An explicit $(4,\frac12)$-Salem set}\label{sec:explicit-sidon}
This section provides an explicit example showing that the endpoint $s=\frac12$ is nonempty. In particular, this example supports the extreme case in Corollary~\ref{cor:unit-distances}. To this end, we introduce the concept of a Sidon set, or set with minimal additive energy. A subset $E$ of an abelian group (usually real numbers, integers, or the elements of a finite field under addition) is called Sidon if the only solutions to the equation $x_1+x_2 = x_3+x_4$ for $x_j\in E$ occur when $\{x_1,x_2\} = \{x_3,x_4\}.$ Here, the construction is a Sidon set of size comparable to $q^{\frac d2}$, so it satisfies
\[
\Lambda_4(E)\ll |E|^2.
\]

Let $p$ be an odd prime, let $n,d\in\mathbb{N}$, and set $q=p^n$.
Assume that $nd$ is even and define
\[
q_0:=p^{\frac{nd}{2}}=q^{\frac d2}.
\]
Then $\mathbb{F}_{q_0}^2$ and $\mathbb{F}_q^d$ are isomorphic as $\mathbb{F}_p$-vector spaces.
Fix an $\mathbb{F}_p$-linear isomorphism
\[
\psi:\ \mathbb{F}_{q_0}^2\to \mathbb{F}_q^d.
\]
Define
\[
U
:=
\psi\bigl(\{(t,t^2):\ t\in\mathbb{F}_{q_0}\}\bigr)
\subset \mathbb{F}_q^d.
\]
The following lemma is well-known and can be proved by a direct computation.

\begin{lemma}\label{lem:sidon-graph}
The set $\{(t,t^2):\ t\in\mathbb{F}_{q_0}\}$ is Sidon in the additive group $\mathbb{F}_{q_0}^2$.
\end{lemma}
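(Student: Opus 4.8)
The plan is to verify directly that the curve $\{(t,t^2): t\in\mathbb{F}_{q_0}\}$ admits no nontrivial additive quadruples, i.e. that the only solutions of
\[
(t_1,t_1^2)+(t_2,t_2^2)=(t_3,t_3^2)+(t_4,t_4^2)
\]
with $t_j\in\mathbb{F}_{q_0}$ are the degenerate ones where $\{t_1,t_2\}=\{t_3,t_4\}$. First I would split the vector equation into its two coordinates: $t_1+t_2=t_3+t_4$ from the first slot and $t_1^2+t_2^2=t_3^2+t_4^2$ from the second. Writing $\sigma=t_1+t_2=t_3+t_4$ for the common sum, I would then use the algebraic identity $t_1^2+t_2^2=(t_1+t_2)^2-2t_1t_2=\sigma^2-2t_1t_2$, and similarly $t_3^2+t_4^2=\sigma^2-2t_3t_4$. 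Substituting into the second equation gives $\sigma^2-2t_1t_2=\sigma^2-2t_3t_4$, hence $2t_1t_2=2t_3t_4$, and since $q_0$ is odd (as $p$ is an odd prime) we may divide by $2$ to conclude $t_1t_2=t_3t_4$.

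**Next I would** observe that the pair $(t_1,t_2)$ and the pair $(t_3,t_4)$ share the same elementary symmetric functions: $t_1+t_2=t_3+t_4$ and $t_1t_2=t_3t_4$. Therefore both pairs are the (multiset of) roots of one and the same monic quadratic polynomial $X^2-\sigma X+t_1t_2\in\mathbb{F}_{q_0}[X]$. A monic polynomial of degree $2$ over a field has at most two roots counted with multiplicity, so $\{t_1,t_2\}=\{t_3,t_4\}$ as multisets, which is precisely the Sidon condition. This completes the argument; the computation is entirely elementary, relying only on $\operatorname{char}\mathbb{F}_{q_0}\neq 2$ and unique factorization of the quadratic.

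**The only point requiring a moment of care** — and it is genuinely minor — is the use of $2\neq 0$ in $\mathbb{F}_{q_0}$; this is guaranteed since $q_0=p^{nd/2}$ with $p$ an odd prime. One should also note that the Sidon property is preserved under the $\mathbb{F}_p$-linear isomorphism $\psi$, so that $U=\psi(\{(t,t^2):t\in\mathbb{F}_{q_0}\})$ is Sidon in $\mathbb{F}_q^d$ as well, since additive quadruples are preserved by any group isomorphism; but that observation belongs to the application of the lemma rather than to its proof. I anticipate no real obstacle here: the statement is flagged as well-known, and the symmetric-function reduction is the standard proof.
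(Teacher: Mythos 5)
Your proof is correct. The paper does not actually supply a proof of this lemma, stating only that it is ``well-known and can be proved by a direct computation''; your argument --- reducing to equality of elementary symmetric functions and invoking unique factorization of the resulting monic quadratic, with the only subtlety being that $2$ is invertible since $q_0$ is odd --- is precisely the standard direct computation the authors have in mind.
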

The next lemma shows that $U$ is $(4, \frac{1}{2})$-Salem set.
\begin{lemma}\label{lem:U-size-energy}
The set $U$ satisfies $|U|=q^{\frac d2}$ and
\[
\Lambda_4(U)\le 2|U|^2.
\]
In particular, $U$ is $(4, \frac{1}{2})$-Salem set.
\end{lemma}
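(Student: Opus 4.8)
The plan is to transfer the whole computation through the $\mathbb{F}_p$-linear isomorphism $\psi$ to the model set $E_0:=\{(t,t^2):t\in\mathbb{F}_{q_0}\}\subset\mathbb{F}_{q_0}^2$, whose additive behaviour is pinned down by Lemma~\ref{lem:sidon-graph}. First I would record the size: since $t\mapsto(t,t^2)$ is injective and $\psi$ is a bijection, $|U|=|E_0|=q_0=q^{\frac d2}$, which is the first claim.

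The key observation is that $\Lambda_4$ only depends on the underlying additive group structure, and $\psi:(\mathbb{F}_{q_0}^2,+)\to(\mathbb{F}_q^d,+)$ is a group isomorphism (being an $\mathbb{F}_p$-linear bijection — note that $\mathbb{F}_p$-linearity, not $\mathbb{F}_q$-linearity, is all that is needed here, since additive quadruples involve only sums). Hence $\psi(a)+\psi(b)=\psi(c)+\psi(d')$ if and only if $a+b=c+d'$, and combined with the injectivity of $\psi$ on $E_0$ this gives a bijection between additive quadruples of $U$ and of $E_0$. Therefore $\Lambda_4(U)=\Lambda_4(E_0)$.

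Next I would count $\Lambda_4(E_0)$ directly from the Sidon property: every solution of $x_1+x_2=x_3+x_4$ with all $x_i\in E_0$ satisfies $\{x_1,x_2\}=\{x_3,x_4\}$. Splitting into the cases $x_1=x_2$ and $x_1\neq x_2$ (which contribute one and two ordered completions respectively) yields $\Lambda_4(E_0)=2|E_0|^2-|E_0|\le 2|E_0|^2$, so $\Lambda_4(U)\le 2|U|^2$, the second claim. For the $(4,\tfrac12)$-Salem conclusion I would simply invoke Definition~\ref{(4,s)-Salem-set} with $s=\tfrac12$, where $4-4s=2$: one needs $\Lambda_4(U)\ll|U|^2+|U|^4q^{-d}$, and the bound $\Lambda_4(U)\le 2|U|^2$ already establishes this with an absolute constant (indeed $|U|^4q^{-d}=q^{2d}q^{-d}=q^d=|U|^2$, so both terms on the right are comparable anyway).

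There is no real obstacle in this lemma; the only point requiring a word of care is that $\psi$ is merely $\mathbb{F}_p$-linear, so one should stress that additive energy is insensitive to the field structure and sees only the abelian group $(\mathbb{F}_q^d,+)$, which is why the transfer of $\Lambda_4$ through $\psi$ is exact.
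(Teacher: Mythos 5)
Your proof is correct and follows essentially the same route as the paper: reduce to the model parabola $\{(t,t^2)\}$ via the additive isomorphism $\psi$, invoke Lemma~\ref{lem:sidon-graph} to get the Sidon property, and then count additive quadruples in a Sidon set. The only cosmetic difference is that you count the quadruples directly by splitting on $x_1=x_2$ versus $x_1\neq x_2$, while the paper passes through the representation function $r_{U-U}$ and the identity $\Lambda_4(U)=\sum_t r_{U-U}(t)^2$; both computations yield $\Lambda_4(U)=2|U|^2-|U|$.
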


\begin{proof}
Since $\psi$ is bijective, we have $|U|=|\mathbb{F}_{q_0}|=q_0=q^{\frac d2}$.
Lemma~\ref{lem:sidon-graph} shows that the preimage of $U$ is a Sidon set in $\mathbb{F}_{q_0}^2$.
Since $\psi$ is an additive isomorphism, $U$ is Sidon in $\mathbb{F}_q^d$.

Write
\[
r_{U-U}(t)=|\{(x,y)\in U^2:\ x-y=t\}|.
\]
Then $r_{U-U}(0)=|U|$, and $r_{U-U}(t)\le 1$ for $t\ne 0$.
Therefore,
\[
\Lambda_4(U)=\sum_{t\in\mathbb{F}_q^d} r_{U-U}(t)^2
\le
|U|^2+\sum_{t\ne 0} r_{U-U}(t)
=
|U|^2+(|U|^2-|U|)
\le
2|U|^2.
\]
This proves the lemma.
\end{proof}

\section{Extensions with $(u,s)$-Salem sets}\label{sec:us-salem}

In this section we extend the point-hyperplane and point-sphere incidence bounds from
$(4,s)$-Salem sets to $(u,s)$-Salem sets for even exponents $u=2k\ge 4$.
The argument follows the same Fourier moment method as in \cite{CGKPTZ25}, and it is an
even-moment variant of the proof used in Section~\ref{section2}.
We present the details in a form that keeps the notation consistent with the rest of the paper. Throughout this section, let $u=2k$ with $k\ge 2$.

\begin{lemma}\label{lem:incidence-us}
Let $d\ge 2$ and let $u=2k$ with $k\ge 2$.
Let $P\subset \mathbb{F}_q^{\,d}$ be a set such that
\begin{equation}\label{eq:us-salem-fourier}
\sum_{m\in\mathbb{F}_q^d\setminus\{0\}} |\widehat{P}(m)|^{u}
\ \ll_u\
q^{-(u-1)d}\,|P|^{u(1-s)},
\end{equation}
where
\[
\widehat{P}(m):=q^{-d}\sum_{x\in\mathbb{F}_q^d} P(x)\,\chi(-m\cdot x).
\]
Let $\mathcal{T}\subset (\mathbb{F}_q^d\setminus\{0\})\times \mathbb{F}_q$, and define
\[
N(P,\mathcal{T})
:=
\bigl|\{(x,(a,b))\in P\times \mathcal{T}:\ a\cdot x=b\}\bigr|.
\]
Then
\[
N(P,\mathcal{T})
\ \le\
\frac{|P|\,|\mathcal{T}|}{q}
\ +\
C_u\,|\mathcal{T}|^{1-\frac{1}{u}}\,q^{\frac{d}{u}}\,|P|^{1-s},
\]
where $C_u>0$ depends only on $u$.
\end{lemma}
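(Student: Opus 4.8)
The plan is to mimic the Fourier-analytic argument behind Theorem~\ref{nonzero} and Lemma~\ref{lm-counting}, simply replacing the fourth moment of $\widehat{P}$ by the $u$-th moment. First I would write the incidence count $N(P,\mathcal{T})$ using character orthogonality: for each pair $(x,(a,b))$, the indicator of $a\cdot x=b$ equals $q^{-1}\sum_{t\in\mathbb{F}_q}\chi(t(a\cdot x-b))$. Separating the $t=0$ term produces the main term $|P|\,|\mathcal{T}|/q$, and the remaining sum over $t\in\mathbb{F}_q^\times$ is the error $\mathcal{E}$. Collecting the sum over $x\in P$ into $\widehat{P}$, one gets
\[
\mathcal{E}=\frac{1}{q}\sum_{(a,b)\in\mathcal{T}}\ \sum_{t\in\mathbb{F}_q^\times}\chi(-tb)\,q^{d}\,\widehat{P}(ta).
\]
Here $ta\ne 0$ for every $t\in\mathbb{F}_q^\times$ since $a\ne 0$, which is exactly why the hypothesis $\mathcal{T}\subset(\mathbb{F}_q^d\setminus\{0\})\times\mathbb{F}_q$ is imposed; this keeps us away from the trivial frequency $m=0$.

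Next I would bound $|\mathcal{E}|$ by Hölder's inequality in the pair $(a,b)$, with exponents $\frac{u}{u-1}$ and $u$, which is the natural choice for extracting the $u$-th moment of $\widehat{P}$. This gives
\[
|\mathcal{E}|\ \le\ q^{d-1}\,|\mathcal{T}|^{1-\frac1u}\left(\sum_{(a,b)\in\mathcal{T}}\Bigl|\sum_{t\in\mathbb{F}_q^\times}\chi(-tb)\,\widehat{P}(ta)\Bigr|^{u}\right)^{\frac1u}.
\]
To control the inner sum, the cleanest route is to enlarge the range of summation: since each fixed nonzero frequency $m$ can be written as $m=ta$ in at most one way for a given $a$ (and, bounding crudely, we may drop the oscillating factor $\chi(-tb)$ and pass to $\sum_{t\in\mathbb{F}_q^\times}|\widehat{P}(ta)|$), we can estimate the total $u$-th moment over $(a,b)\in\mathcal{T}$ by $q\cdot q^{u-1}$ times the full moment $\sum_{m\ne 0}|\widehat{P}(m)|^{u}$ — the factor $q$ from the $b$-variable being free, and a factor handling the $t$-sum via the power-mean / triangle inequality. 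Plugging in hypothesis~\eqref{eq:us-salem-fourier}, which says this full moment is $\ll_u q^{-(u-1)d}|P|^{u(1-s)}$, and simplifying the powers of $q$, the error collapses to $C_u\,|\mathcal{T}|^{1-\frac1u}q^{\frac{d}{u}}|P|^{1-s}$, as claimed; finally one observes $N(P,\mathcal{T})-|P||\mathcal{T}|/q=\mathcal{E}$ is real, so the one-sided bound $N(P,\mathcal{T})\le |P||\mathcal{T}|/q+|\mathcal{E}|$ follows.

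The main obstacle I expect is bookkeeping the exact power of $q$ lost when passing from the structured inner sum $\sum_{t}\chi(-tb)\widehat{P}(ta)$ to the unstructured full moment $\sum_{m\ne0}|\widehat{P}(m)|^{u}$: one must be careful that the dilation $t\mapsto ta$ together with the free $b$-variable contributes precisely the factors that, after combining with the $q^{-(u-1)d}$ in the hypothesis and the $q^{d-1}$ out front, yield the net exponent $\frac{d}{u}$ on $q$ (and not, say, $\frac{d+1}{u}$). A slightly more careful treatment — for instance applying Hölder to the $t$-sum first, or noting that for fixed $a$ the map $t\mapsto ta$ is injective so that no frequency is overcounted in the $t$-direction — recovers the sharp exponent; the looser treatment would only cost an extra $q^{1/u}$, matching the remark after Theorem~\ref{nonzero} about the $b=0$ case. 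Everything else is the routine character-sum manipulation already used in \cite{CGKPTZ25}, so I would present those steps tersely and concentrate the write-up on getting the exponent bookkeeping exactly right.
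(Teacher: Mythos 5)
Your outline matches the paper's proof up to the moment estimate: write the main term via character orthogonality, isolate $D(a,b)=N(a,b)-|P|/q$, and apply H\"older in $(a,b)$ with exponents $(\tfrac{u}{u-1},u)$. The place where your proposal breaks down is exactly the step you flag as ``the main obstacle,'' namely bounding $\sum_{(a,b)}\bigl|\sum_{t\ne 0}\chi(-tb)\widehat{P}(ta)\bigr|^{u}$. Your crude route --- dropping the oscillation $\chi(-tb)$ by the triangle inequality, applying the power-mean bound $(\sum_{t}|\widehat{P}(ta)|)^u\le (q-1)^{u-1}\sum_t|\widehat{P}(ta)|^u$, and summing $b$ freely --- costs a factor $q\cdot(q-1)^{u-1}\approx q^{u}$ per $a$, and as you correctly compute this yields the weaker exponent $q^{(d+1)/u}$, not the claimed $q^{d/u}$.

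Neither of the two ``more careful'' fixes you suggest recovers the lost factor of $q$. The injectivity of $t\mapsto ta$ for fixed $a$ is already implicit in the change of variables $(a,t)\mapsto m=ta$, which you use in both bookkeeping versions; it gives nothing extra. And ``applying H\"older to the $t$-sum first'' is just the power-mean bound again --- it is the crude step, not an improvement on it. The ingredient your proposal is missing is the one the paper highlights in Remark~\ref{rem:parity}: because $u=2k$ is even, one may write $\bigl|\sum_{t}\chi(-tb)\widehat{P}(ta)\bigr|^{u}$ by expanding the $2k$-fold product, \emph{keep} the oscillating factors, and only then sum over $b\in\mathbb{F}_q$. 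Orthogonality forces the balanced constraint $t_1+\cdots+t_k=t_{k+1}+\cdots+t_{2k}$, which eliminates one degree of freedom among the $t_j$'s; combined with the AM--GM bound $c_1\cdots c_{2k}\le\tfrac{1}{2k}\sum_j c_j^{2k}$ this gives $q^{u-1}\sum_{t\ne 0}|\widehat{P}(ta)|^u$ per $a$, a factor $q$ better than the triangle-inequality route. That single factor of $q$ is precisely the $q^{1/u}$ separating your bound from the claimed one, and it is also the reason the lemma is restricted to even $u$ --- for odd $u$ the expansion does not yield a single balanced linear constraint. As written, your proof would only establish the weaker bound of Remark~\ref{rem:us-bzero}; to get the lemma as stated you need to replace the triangle-inequality step with the expand-and-orthogonalize-in-$b$ argument.
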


\begin{proof}
For $(a,b)\in (\mathbb{F}_q^d\setminus\{0\})\times \mathbb{F}_q$, write
\[
N(a,b):=\bigl|\{x\in P:\ a\cdot x=b\}\bigr|,
\qquad
D(a,b):=N(a,b)-\frac{|P|}{q}.
\]
Then
\[
N(P,\mathcal{T})
=\frac{|P|\,|\mathcal{T}|}{q}
+
\sum_{(a,b)\in \mathcal{T}} D(a,b).
\]
By H\"older's inequality,
\begin{equation}\label{eq:holder-step-us}
\sum_{(a,b)\in \mathcal{T}} D(a,b)
\ \le\
|\mathcal{T}|^{1-\frac1u}\Bigl(\sum_{(a,b)\in \mathcal{T}} |D(a,b)|^{u}\Bigr)^{\frac{1}{u}}.
\end{equation}

We bound the $u$-th moment by enlarging the sum to all $(a,b)$ with $a\ne 0$.
By character orthogonality,
\[
N(a,b)
=
\frac{1}{q}\sum_{t\in\mathbb{F}_q}\sum_{x\in\mathbb{F}_q^d}
P(x)\,\chi\!\bigl(-t(a\cdot x-b)\bigr).
\]
Separating the term $t=0$ gives
\begin{equation}\label{eq:Dab-formula-us}
D(a,b)=q^{d-1}\sum_{t\in\mathbb{F}_q^\ast}\chi(tb)\,\widehat{P}(ta).
\end{equation}
Therefore,
\[
\sum_{(a,b)\in \mathcal{T}}|D(a,b)|^{u}
\ \le\
\sum_{a\ne 0}\sum_{b\in\mathbb{F}_q}|D(a,b)|^{u}
=
q^{u(d-1)}\sum_{a\ne 0}\sum_{b\in\mathbb{F}_q}
\Bigl|\sum_{t\ne 0}\chi(tb)\widehat{P}(ta)\Bigr|^{u}.
\]

Fix $a\ne 0$.
Expanding the $u$-th power and summing over $b\in\mathbb{F}_q$ enforces the constraint
$t_1+\cdots+t_k=t_{k+1}+\cdots+t_{2k}$.
Using the standard inequality
$c_1\cdots c_{2k}\le \frac{1}{2k}\sum\limits_{j=1}^{2k} c_j^{2k}$ for nonnegative $c_j$,
one obtains
\[
\sum_{b\in\mathbb{F}_q}\Bigl|\sum_{t\ne 0}\chi(tb)\widehat{P}(ta)\Bigr|^{u}
\ \ll_u\
q^{u-1}\sum_{t\ne 0} |\widehat{P}(ta)|^{u}.
\]
Thus,
\[
\sum_{a\ne 0}\sum_{b\in\mathbb{F}_q}|D(a,b)|^{u}
\ \ll_u\
q^{u(d-1)}\cdot q^{u-1}\sum_{a\ne 0}\sum_{t\ne 0} |\widehat{P}(ta)|^{u}.
\]
Using the change of variables $m=ta$, the map $(a,t)\mapsto m$ from $\{a\ne 0,\ t\ne 0\}$ onto
$\{m\ne 0\}$ has fiber size $q-1$. Hence
\[
\sum_{a\ne 0}\sum_{t\ne 0} |\widehat{P}(ta)|^{u}
=
(q-1)\sum_{m\ne 0}|\widehat{P}(m)|^{u}
\ll
q\sum_{m\ne 0}|\widehat{P}(m)|^{u}.
\]
Consequently,
\[
\sum_{a\ne 0}\sum_{b\in\mathbb{F}_q}|D(a,b)|^{u}
\ \ll_u\
q^{ud}\sum_{m\ne 0}|\widehat{P}(m)|^{u}.
\]
By \eqref{eq:us-salem-fourier},
\[
\sum_{a\ne 0}\sum_{b\in\mathbb{F}_q}|D(a,b)|^{u}
\ \ll_u\
q^{ud}\cdot q^{-(u-1)d}|P|^{u(1-s)}
=
q^{d}|P|^{u(1-s)}.
\]
Combining this with \eqref{eq:holder-step-us} yields
\[
\sum_{(a,b)\in \mathcal{T}} D(a,b)
\ \ll_u\
|\mathcal{T}|^{1-\frac1u}\,q^{\frac{d}{u}}\,|P|^{1-s}.
\]
This proves the lemma.
\end{proof}

\begin{remark}\label{rem:parity}
The proof of Lemma~\ref{lem:incidence-us} uses an even moment.
When $u=2k$ is even, expanding the $2k$-th power and summing over $b$ produces a single balanced linear constraint in the $t$-variables, which is the step that converts the moment bound into an estimate in terms of $\sum\limits_{m\ne 0}|\widehat{P}(m)|^{2k}$.
For odd $u$, this mechanism does not produce a balanced constraint of the same form.
Therefore, we restrict to even exponents $u=2k$.
\end{remark}

\begin{theorem}\label{thm:us-theorem41}
Let $u=2k\ge 4$ be even.
Let $P\subset\mathbb{F}_q^{\,d}$ satisfy \eqref{eq:us-salem-fourier}.
Let $H$ be a set of hyperplanes in $\mathbb{F}_q^{\,d}$ of the form
\[
a\cdot x=b,
\qquad a\ne 0,\quad b\ne 0.
\]
Then
\[
I(P,H)
\ \le\
\frac{|P|\,|H|}{q}
\ +\
C_u\,|H|^{1-\frac1u}\,q^{\frac{d-1}{u}}\,|P|^{1-s},
\]
where $C_u>0$ depends only on $u$.
\end{theorem}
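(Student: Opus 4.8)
The plan is to deduce Theorem~\ref{thm:us-theorem41} directly from the symmetric counting bound of Lemma~\ref{lem:incidence-us} by a homogenization (scaling) step, in complete analogy with the way the $u=4$ statement Theorem~\ref{nonzero} relates to Lemma~\ref{lm-counting} and with the treatment of the class $S_1$ in Section~\ref{section2}. Applying Lemma~\ref{lem:incidence-us} naively, with $\mathcal{T}$ a single defining pair $(a_h,b_h)$ per hyperplane $h\in H$ (here $|\mathcal{T}|=|H|$ and $N(P,\mathcal{T})=I(P,H)$), would already give $I(P,H)\le \tfrac{|P||H|}{q}+C_u|H|^{1-1/u}q^{d/u}|P|^{1-s}$, which is weaker than the claimed bound by a factor $q^{1/u}$. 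The extra saving comes from exploiting the fact that a hyperplane with nonzero constant term is cut out by $q-1$ distinct equations $\lambda a_h\cdot x=\lambda b_h$, $\lambda\in\mathbb{F}_q^\times$; passing to the union of all these defining pairs multiplies the relevant counting function by $q-1$, and dividing that factor out at the very end turns the $q^{d/u}$ of Lemma~\ref{lem:incidence-us} into $q^{(d-1)/u}$.

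Concretely, for each $h\in H$ fix a defining pair $(a_h,b_h)$, so that $a_h\cdot x=b_h$ is the equation of $h$; by hypothesis $a_h\ne0$ and $b_h\ne0$ (and $b_h\ne0$ is intrinsic to $h$, since two pairs with nonzero first coordinate define the same hyperplane iff one is a nonzero scalar multiple of the other). Put
\[
\mathcal{T}:=\bigl\{(\lambda a_h,\lambda b_h):\ h\in H,\ \lambda\in\mathbb{F}_q^\times\bigr\}.
\]
Then $\mathcal{T}\subset(\mathbb{F}_q^{\,d}\setminus\{0\})\times\mathbb{F}_q$, since $\lambda a_h\ne0$. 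Each family $\{(\lambda a_h,\lambda b_h):\lambda\in\mathbb{F}_q^\times\}$ consists of exactly $q-1$ distinct pairs (as $a_h\ne0$), and these families are pairwise disjoint for distinct $h$: an identity $(\lambda a_h,\lambda b_h)=(\mu a_{h'},\mu b_{h'})$ would force $(a_{h'},b_{h'})=(\lambda\mu^{-1})(a_h,b_h)$, i.e.\ $h=h'$. Hence $|\mathcal{T}|=(q-1)|H|$. Moreover, for $x\in P$ one has $a_h\cdot x=b_h$ if and only if $\lambda a_h\cdot x=\lambda b_h$ for every $\lambda\in\mathbb{F}_q^\times$, so each incidence $(x,h)$ counted by $I(P,H)$ corresponds to exactly $q-1$ pairs $(x,(\lambda a_h,\lambda b_h))$ counted by $N(P,\mathcal{T})$, and every such pair arises this way from a unique incidence. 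Therefore $N(P,\mathcal{T})=(q-1)\,I(P,H)$.

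Finally, apply Lemma~\ref{lem:incidence-us} to $P$ (which satisfies \eqref{eq:us-salem-fourier} by assumption) and this $\mathcal{T}$:
\[
(q-1)\,I(P,H)=N(P,\mathcal{T})\ \le\ \frac{|P|\,(q-1)|H|}{q}\ +\ C_u\bigl((q-1)|H|\bigr)^{1-\frac1u}q^{\frac{d}{u}}|P|^{1-s}.
\]
Dividing by $q-1$ leaves the main term equal to $|P||H|/q$ and turns the error term into $C_u(q-1)^{-1/u}|H|^{1-1/u}q^{d/u}|P|^{1-s}$; since $(q-1)^{-1/u}q^{d/u}\le 2^{1/u}q^{(d-1)/u}$ for all $q\ge2$, absorbing the factor $2^{1/u}$ into $C_u$ gives the asserted inequality. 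There is no analytic difficulty at this stage — all of the Fourier/moment estimation is packaged inside Lemma~\ref{lem:incidence-us} — so the only real content is the combinatorial observation that one should homogenize each hyperplane into its full orbit of $q-1$ defining pairs before invoking the lemma; this is exactly what produces the improved exponent $q^{(d-1)/u}$ in place of $q^{d/u}$.
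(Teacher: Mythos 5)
Your proposal is correct and takes essentially the same approach as the paper: homogenize each hyperplane into its $\mathbb{F}_q^\times$-orbit of $q-1$ defining pairs, apply Lemma~\ref{lem:incidence-us} to the resulting $\mathcal{T}$ with $|\mathcal{T}|=(q-1)|H|$ and $N(P,\mathcal{T})=(q-1)I(P,H)$, then divide by $q-1$. If anything, you are slightly more careful than the paper's write-up, which passes from $(q-1)^{-1/u}q^{d/u}$ directly to $q^{(d-1)/u}$ even though $(q-1)^{-1/u}>q^{-1/u}$; your explicit bound $(q-1)^{-1/u}q^{d/u}\le 2^{1/u}q^{(d-1)/u}$ with the factor absorbed into $C_u$ is the correct way to make that step rigorous.
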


\begin{proof}
For each hyperplane $a\cdot x=b$, the equations
$a\cdot x=b$ and $(\lambda a)\cdot x=\lambda b$ are equivalent for every $\lambda\in\mathbb{F}_q^\ast$.
Define
\[
\mathcal{T}
:=
\{(\lambda a,\lambda b): (a,b)\in H,\ \lambda\in\mathbb{F}_q^\ast\}
\subset (\mathbb{F}_q^d\setminus\{0\})\times\mathbb{F}_q.
\]
Then $|\mathcal{T}|=(q-1)|H|$ and
\[
I(P,H)=\frac{1}{q-1}\,N(P,\mathcal{T}).
\]
Applying Lemma~\ref{lem:incidence-us} and dividing by $q-1$, we obtain
\[
I(P,H)
\le
\frac{|P|\,|H|}{q}
+
C_u\,(q-1)^{-\frac1u}\,|H|^{1-\frac1u}\,q^{\frac{d}{u}}\,|P|^{1-s}
\le
\frac{|P|\,|H|}{q}
+
C_u\,|H|^{1-\frac1u}\,q^{\frac{d-1}{u}}\,|P|^{1-s}.
\]
This completes the proof.
\end{proof}

\begin{remark}\label{rem:us-bzero}
If $H$ contains hyperplanes with $b=0$, then the same argument yields the weaker bound
\[
I(P,H)
\ \le\
\frac{|P|\,|H|}{q}
\ +\
C_u\,|H|^{1-\frac1u}\,q^{\frac{d}{u}}\,|P|^{1-s}.
\]
\end{remark}

\begin{theorem}\label{thm:us-spheres}
Let $d\ge 2$ and let $u=2k\ge 4$ be even.
Let $P\subset \mathbb{F}_q^{\,d}$ satisfy \eqref{eq:us-salem-fourier}.
Let $S$ be a finite set of spheres in $\mathbb{F}_q^d$ of the form
\[
\|x-a\|=r,
\qquad a\in\mathbb{F}_q^d,\ r\in\mathbb{F}_q.
\]
Assume that
\[
|P|\ \ll\ q^{\frac{d}{us}}.
\]
Then
\[
\left| I(P,S)-\frac{|P|\,|S|}{q}\right|
\ \ll_u\
q^{\frac{d}{u}}\,|P|^{1-s}\,|S|^{1-\frac1u}.
\]
\end{theorem}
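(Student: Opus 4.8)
The plan is to mimic the proof of Theorem~\ref{thm1} almost verbatim, now invoking the even-moment incidence bounds of this section in place of the $(4,s)$-versions from \cite{CGKPTZ25}. First I would lift $P\subset\mathbb{F}_q^d$ to $P':=\{(x,\|x\|):x\in P\}\subset\mathbb{F}_q^{d+1}$ and check that $P'$ still satisfies the hypothesis \eqref{eq:us-salem-fourier} in dimension $d+1$. By Lemma~\ref{cor:equivalence-Salem-bound}, the Fourier condition \eqref{eq:us-salem-fourier} is equivalent to the additive-energy bound $\Lambda_{2k}(P)\ll |P|^{2k(1-s)}+|P|^{2k}q^{-d}$, and the same additive-quadruple (now $2k$-tuple) argument as in Lemma~\ref{lem:lift-salem} shows $\Lambda_{2k}(P')\le\Lambda_{2k}(P)$; since $|P'|=|P|$, the ambient-dimension term $|P|^{2k}q^{-(d+1)}$ only shrinks, so $P'$ satisfies \eqref{eq:us-salem-fourier} in $\mathbb{F}_q^{d+1}$ (here I would use the size hypothesis $|P|\ll q^{d/(us)}$ exactly as in Section~\ref{section2}, to ensure the energy is dominated by the $|P|^{2k(1-s)}$ term so the relevant estimate closes).

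Next I would rewrite each sphere $\|x-a\|=r$ as the affine hyperplane $(-2a,1)\cdot(x,t)=r-\|a\|$ in $\mathbb{F}_q^{d+1}$, exactly as in Section~\ref{section2}, so that $I(P,S)=I(P',H)$ with $H=\{H_{a,r}\}$, and split $S=S_1\cup S_2$ according to whether $r-\|a\|\ne 0$ or $r=\|a\|$. For $S_1$ the hyperplanes have nonzero constant term, so Theorem~\ref{thm:us-theorem41} applied to $P'$ (in dimension $d+1$) gives
\[
\left| I(P,S_1)-\frac{|P|\,|S_1|}{q}\right|
\ll_u |S_1|^{1-\frac1u}\,q^{\frac{d}{u}}\,|P|^{1-s},
\]
since the exponent of $q$ there is $\frac{(d+1)-1}{u}=\frac{d}{u}$. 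For $S_2$, the condition $\|x-a\|=\|a\|$ is $(-2a,1)\cdot(x,\|x\|)=0$; scaling by $\lambda\in\mathbb{F}_q^\ast$ as in Section~\ref{section2} produces a set $\mathcal T\subset(\mathbb{F}_q^{d+1}\setminus\{0\})\times\mathbb{F}_q$ of size $(q-1)|S_2|$ with $N(P',\mathcal T)=(q-1)I(P,S_2)$; applying Lemma~\ref{lem:incidence-us} in dimension $d+1$ and dividing by $q-1$ gives
\[
\left| I(P,S_2)-\frac{|P|\,|S_2|}{q}\right|
\ll_u (q-1)^{-\frac1u}|S_2|^{1-\frac1u}q^{\frac{d+1}{u}}|P|^{1-s}
\ll_u |S_2|^{1-\frac1u}q^{\frac{d}{u}}|P|^{1-s}.
\]

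Finally I would add the two contributions via the triangle inequality and bound $|S_1|,|S_2|\le|S|$, yielding the claimed estimate
\[
\left| I(P,S)-\frac{|P|\,|S|}{q}\right|\ll_u q^{\frac{d}{u}}|P|^{1-s}|S|^{1-\frac1u}.
\]
The only genuinely new point beyond transcribing Section~\ref{section2} is verifying that the lifted set $P'$ still satisfies the Fourier hypothesis \eqref{eq:us-salem-fourier} in one higher dimension; I expect this to be the main obstacle, but it is handled cleanly by passing through the energy reformulation of Lemma~\ref{cor:equivalence-Salem-bound}, observing $\Lambda_{2k}(P')\le\Lambda_{2k}(P)$, and noting that the ambient-size term $|P|^{2k}/q^{d}$ in the energy bound can only decrease (to $|P|^{2k}/q^{d+1}$) under the lift, while the structured term $|P|^{2k(1-s)}$ is unchanged because $|P'|=|P|$. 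Everything else is routine bookkeeping with the exponents, entirely parallel to the $u=4$ case.
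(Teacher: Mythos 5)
Your proof is correct and follows the same overall route as the paper: lift $P$ to $P'\subset\mathbb{F}_q^{d+1}$, verify that $P'$ still satisfies the $(u,s)$-Salem hypothesis (using $\Lambda_{2k}(P')\le\Lambda_{2k}(P)$ together with the size restriction $|P|\ll q^{d/(us)}$ to absorb the extra factor of $q$ that appears when passing from $q^{-d}$ to $q^{-(d+1)}$ in the energy reformulation), convert spheres to hyperplanes, split by whether $r-\|a\|$ vanishes, and apply the even-moment incidence bounds. One point worth flagging: for the $S_2$ contribution, the paper's terse proof references Remark~\ref{rem:us-bzero}, which applied to $P'\subset\mathbb{F}_q^{d+1}$ would only give the factor $q^{(d+1)/u}$, a $q^{1/u}$ worse than the claimed bound. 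Your scaling-by-$\lambda\in\mathbb{F}_q^\ast$ argument, applying Lemma~\ref{lem:incidence-us} directly and dividing by $q-1$, recovers the sharp $q^{d/u}$ — and this is exactly what the paper actually does for the $u=4$ case in Section~\ref{section2}, so your treatment is the more careful rendering of the intended argument.
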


\begin{proof}
The proof follows the reduction in Section~\ref{section2}.
One lifts $P$ to a subset of $\mathbb{F}_q^{d+1}$ along the quadratic form and rewrites each sphere as an affine hyperplane.
The contribution of spheres with nonzero constant term is handled by Theorem~\ref{thm:us-theorem41}.
The remaining spheres are handled by the variant in Remark~\ref{rem:us-bzero}.
Combining the two contributions gives the stated bound.
\end{proof}


\section{Further applications and discussion}\label{sec:s-sidon}

We now give some brief notes on further applications of the results here.

\subsection{A relaxed notion of Sidon sets}

We begin by defining the difference representation function $r_{E-E}(t),$ which quantifies how often a particular difference occurs. Specifically, for $d\ge 1,$ $E\subset \mathbb{F}_q^{\,d}$, and $t\in \mathbb{F}_q^{\,d}$, define
\[
r_{E-E}(t):=
\bigl|\{(x,y)\in E\times E:\ x-y=t\}\bigr|.
\]
The main results of this paper are stated for $(4,s)$-Salem sets, since the Fourier formulation converts into bounds for the fourth energy $\Lambda_4(E)$. At the same time, in the proofs the Salem input enters through quantitative control of $r_{E-E}(t)$, typically via estimates of the form $\Lambda_4(E)\ll |E|^{4-4s}$ and their direct consequences. This suggests that several steps admit a formulation in purely combinatorial terms, phrased directly in terms of $r_{E-E}$.

Motivated by this observation, we introduce two Sidon-type regularity conditions parameterized by $s$.
The first one is pointwise and the second one is distributional. Proposition~\ref{prop:exist-s-sidon} provides explicit examples, so one can see that these hypotheses are nontrivial. To state these definitions, we give some notation. Given a set $E,$ denote the maximum number of times any difference is represented in the difference set by 
\[M_E:=\max_{t\in \mathbb{F}_q^{\,d}\setminus\{0\}} r_{E-E}(t),\]
and the number of $n$-rich differences by
\[R_n(E):=\{t\in \mathbb{F}_q^{\,d}\setminus\{0\}:\ r_{E-E}(t)\ge n\}.\]


\begin{definition}\label{def:s-sidon}
Let $s\in \big[\frac{1}{4},\frac12\big]$.

\smallskip
\noindent
(i) We say that $E$ is a \emph{strong $s$-Sidon set} if $|E|\le q^{\frac{d}{4s}}$, and there exists $C\ge 1$ (independent of $q$) such that
\begin{equation}\label{eq:strong-s-sidon}
M_E \le\ C\,|E|^{\,2-4s}.
\end{equation}

\smallskip
\noindent
(ii)  We say that $E$ is a \emph{weak $s$-Sidon set} if $|E|\le q^{\frac{d}{4s}}$, and there exists $C\ge 1$ (independent of $q$) such that for every real number $n\ge 1$ one has
\begin{equation}\label{eq:weak-s-sidon}
|R_n(E)|\le\ C\,\frac{|E|^{\,4-4s}}{n^{2}}.
\end{equation}
\end{definition}

To motivate the final definition, we remind the reader of Chebychev's Inequality.
\begin{proposition}\label{chebychev}
Given a finite set $A$ and a bounded function $f:A\rightarrow \mathbb R,$ with
\[\mu := |A|^{-1}\sum_{x\in A}f(x), \text{ and } \sigma := \sqrt{|A|^{-1}\sum_{x\in A} |f(x)-\mu|^2},\]
we have that
\[|\{x\in A: |f(x)-\mu|\geq n\sigma\}|\leq \frac{|A|}{n^2}.\]
\end{proposition}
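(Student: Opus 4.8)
The plan is to derive this directly from Markov's inequality applied to the nonnegative function $g(x):=|f(x)-\mu|^2$ on $A$. First I would introduce the set $B:=\{x\in A:\ |f(x)-\mu|\ge n\sigma\}$, whose cardinality is exactly the quantity to be bounded, and record the pointwise consequence that $g(x)=|f(x)-\mu|^2\ge n^2\sigma^2$ for every $x\in B$.

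Next I would use the defining identity for $\sigma$ in the form $\sum_{x\in A}g(x)=|A|\sigma^2$, which is immediate from the definition. Discarding the nonnegative contributions of the points $x\notin B$ and then inserting the pointwise lower bound valid on $B$ yields the chain $|A|\sigma^2=\sum_{x\in A}g(x)\ge\sum_{x\in B}g(x)\ge |B|\,n^2\sigma^2$. Dividing through by $\sigma^2$ (assuming $\sigma>0$, the case $\sigma=0$, i.e. $f\equiv\mu$ on $A$, being degenerate and treated in a line) gives $|A|\ge |B|\,n^2$, which rearranges to $|B|\le |A|/n^2$, as claimed.

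There is essentially no obstacle here: this is nothing more than the finite-set, counting version of the classical Chebyshev inequality, and the only point meriting a word of care is the normalization by $\sigma^2$ together with the trivial $\sigma=0$ edge case. I include it only because it is the conceptual template motivating the distributional \emph{weak $s$-Sidon} condition of Definition~\ref{def:s-sidon}, where one reads $r_{E-E}$ as the function $f$, its mean as the ``expected'' representation count, and the rich-difference sets $R_n(E)$ as the level sets $B$.
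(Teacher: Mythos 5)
The paper states this proposition as a reminder of the classical Chebyshev inequality and does not supply a proof, so there is no argument in the paper to compare against; your Markov-style derivation is the standard one and is correct for $\sigma>0$. The chain $|A|\sigma^2=\sum_{x\in A}|f(x)-\mu|^2\ge\sum_{x\in B}|f(x)-\mu|^2\ge |B|\,n^2\sigma^2$, followed by division by $\sigma^2$, is exactly what one should write.

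One point you dismiss a little too quickly: with the non-strict inequality $|f(x)-\mu|\ge n\sigma$ as stated, the degenerate case $\sigma=0$ (i.e.\ $f\equiv\mu$) does not merely need ``a line'' — it actually falsifies the conclusion for $n>1$, since then $B=A$ while $|A|/n^2<|A|$. This is an artifact of the statement rather than of your argument, and it is vacuous in the paper's sole application (there $f=r_{E-E}$ on $A=E-E$, and for $|E|\ge 2$ one has $r_{E-E}(0)=|E|$ strictly larger than $r_{E-E}(t)$ for $t\neq0$, forcing $\sigma>0$), but a careful write-up should either assume $\sigma>0$ or use a strict inequality in defining the level set. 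With that caveat recorded, your proof is complete and matches the intent of the proposition as the conceptual template for Definition~\ref{def:s-sidon}.
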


To see how this relates to the definition of weak $s$-Sidon sets, notice that Chebychev's Inequality applied to $r_{E-E}(t)$, where $t$ ranges through $E-E,$ we have that for any set $E\subset \mathbb F_q^d,$ the bound $|R_n(E)|\ll |E-E|n^{-2}.$ By using the trivial bound $|E-E|\leq |E|^2,$ this gives us that
\begin{equation}\label{trivialChebychev}
    |R_n(E)|\ll |E|^2n^{-2}.
\end{equation}
So even if we do not have an exact value for $|E-E|,$ but we do know that the set $E$ is weak $s$-Sidon for some value of $s,$ then the parameter $s$ quantifies how much we gain by satisfying \eqref{eq:weak-s-sidon} over this general bound, \eqref{trivialChebychev}.



\begin{remark}\label{rem:s-sidon-range}
The effective range of the strong $s$-Sidon condition is constrained by both trivial bounds and integrality considerations.
Indeed, since $M_E\le |E|$, the strong condition
\eqref{eq:strong-s-sidon} is vacuous for $s\le \tfrac14$.
On the other hand, when $s>\tfrac14$, the same condition implies the nontrivial growth estimate
\[
|E-E|\ge c\,|E|^{4s},
\]
and consequently forces the size restriction
\[
|E|\le C\,q^{\frac{d}{4s}}.
\]
In particular, the endpoint $s=\tfrac12$ corresponds to the classical Sidon condition: if $E$ is a strong $\tfrac12$--Sidon set with constant $C=1$, then $E$ is a Sidon set in the usual sense.
For $s>\tfrac12$, the right-hand side of \eqref{eq:strong-s-sidon} tends to zero as $|E|\to\infty$, while the representation function $r_{E-E}(t)$ takes nonnegative integer values, so no nontrivial examples can exist in this regime.

By contrast, the weak condition \eqref{eq:weak-s-sidon} remains meaningful for all $s\in[0,1]$ and should be interpreted as a distributional analogue of pointwise Sidon control. 
Moreover, their relationship makes sense in that for a given parameter $s,$ we will see that a strong $s$-Sidon set is necessarily a weak $s$-Sidon set.
\end{remark}



\begin{remark}\label{rem:s-sidon-salem}
Recall that
\[
\Lambda_4(E)
=
\bigl|\{(x_1,x_2,x_3,x_4)\in E^4:\ x_1-x_2=x_3-x_4\}\bigr|
=
\sum_{t\in \mathbb{F}_q^{\,d}} r_{E-E}(t)^2.
\]
If $E$ is strong $s$-Sidon, then $\Lambda_4(E)\ll |E|^{4-4s}$.
Indeed, recalling that $M_E=\max\limits_{t\ne 0} r_{E-E}(t)$.
Since $\sum\limits_{t\ne 0} r_{E-E}(t)=|E|^2-|E|$, we have
\[
\Lambda_4(E)
=
|E|^2+\sum_{t\ne 0} r_{E-E}(t)^2
\le
|E|^2+M_E\sum_{t\ne 0} r_{E-E}(t)
\ll
|E|^2+|E|^{2-4s}(|E|^2-|E|)
\ll
|E|^{4-4s}.
\]
In the regime $|E|\ll q^{\frac{d}{4s}}$, the term $ \frac{|E|^4}{q^d}$ is dominated by $|E|^{4-4s}$.
Then Lemma~\ref{cor:equivalence-Salem-bound} and \cite[Remark~16]{CGKPTZ25} show that this implies that $E$ is a $(4,s)$-Salem set.

Conversely, if $E$ is a $(4,s)$-Salem set with $|E|\ll q^{\frac{d}{4s}}$, then $\Lambda_4(E)\ll |E|^{4-4s}$.
Therefore, by Chebyshev's inequality applied to $r_{E-E}$, we have that $E$ satisfies \eqref{eq:weak-s-sidon}, so 
$E$ is weak $s$-Sidon.
\end{remark}

The fact that these generalized notions of Sidon sets and Salem sets interact is consistent with the literature. In \cite{CS25}, it was shown that graphs of certain highly nonlinear functions called bent functions are Salem sets. The quadratic functions studied here fit into this general picture. Moreover, in certain settings, bent functions are not available, and a class of functions called almost perfect nonlinear functions (APN functions) is often employed. There are a number of relationships between APN functions and Sidon sets. See \cite{MT25} and the references contained therein.

We now give an example to demonstrate that weak $s$-Sidon does not imply strong $s$-Sidon, so we can see that these two definitions are not redundant. To see an example of where this occurs, we construct a set that is weakly $\frac{1}{2}$-Sidon but not strongly $\frac{1}{2}$-Sidon. Fix $q$ to be a large prime, so we can treat the elements like integers modulo $q$, set $m=\lfloor q/4 \rfloor,$ and consider the set
\[F:=\{x^2:0\leq x< \sqrt q\}\cup \{x^2+m:0\leq x< \sqrt q\}.\]
Clearly $F$ is not a strong $\frac{1}{2}$-Sidon set, as the difference $m$ appears too often, but it is the only difference that occurs more than constantly many times (in fact, $m$ is a difference with unusually high multiplicity), so one can check that $F$ is a weak $\frac{1}{2}$-Sidon set, and that $\Lambda_4(F)\ll |F|^2$.

Next, we show the notion of a weak $s$-Sidon set can give nontrivial energy bounds by modifying the calculation from Remark \ref{rem:s-sidon-salem}. Specifically, if we have a set that is not strong $s$-Sidon for a given value $s$, but is weak $s$-Sidon, we may still have some nontrivial energy bounds. We state this as a flexible technical lemma, then provide a more easily stated corollary.


\begin{lemma}\label{lem:weak-tech}
    If $E$ is weak $s$-Sidon, then
    \[\Lambda_4(E)\ll M_E \, |E|^{2-2s} \, |E-E|^\frac{1}{2}.\]
\end{lemma}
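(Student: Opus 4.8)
The plan is to bound $\Lambda_4(E)=\sum_{t} r_{E-E}(t)^2$ by a dyadic decomposition of the level sets of $r_{E-E}$, exactly as one does in passing from pointwise to distributional estimates. First I would write $\Lambda_4(E)=|E|^2+\sum_{t\neq 0}r_{E-E}(t)^2$, and note that the diagonal term $|E|^2$ is harmless (it is dominated by the claimed right-hand side since $M_E\cdot|E|^{2-2s}\cdot|E-E|^{1/2}\gtrsim 1\cdot|E|^{2-2s}\cdot|E|^{1/2}$ is at least $|E|^2$ when $s\le 1/2$; in any case the nonzero part is what requires the weak $s$-Sidon hypothesis).

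For the off-diagonal sum, I would split the range $1\le r_{E-E}(t)\le M_E$ into dyadic blocks $n\in[2^j,2^{j+1})$ for $0\le j\le \log_2 M_E$. On each block, the set of relevant $t$ has size at most $|R_{2^j}(E)|\le C|E|^{4-4s}2^{-2j}$ by the weak $s$-Sidon bound \eqref{eq:weak-s-sidon}, and each contributes at most $(2^{j+1})^2\asymp 2^{2j}$ to the sum of squares, so the block contributes $\ll |E|^{4-4s}$. Naively summing over all $O(\log M_E)$ blocks costs a logarithm, which is not allowed in a $\ll$ statement, so instead I would exploit the target shape: the key is that the contribution of the $j$-th block is also bounded by $2^{j+1}\cdot\sum_{t\in R_{2^j}(E)}r_{E-E}(t)$, and more usefully, by combining $|R_{2^j}(E)|\le|E-E|$ (a trivial bound, better than the weak bound for small $n$) with $|R_{2^j}(E)|\le C|E|^{4-4s}2^{-2j}$ (better for large $n$), the crossover happens at $2^{2j}\asymp|E|^{4-4s}/|E-E|$. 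Summing the two geometric series up to and past the crossover gives $\sum_{t\neq0}r_{E-E}(t)^2\ll |E-E|^{1/2}\,(|E|^{4-4s})^{1/2}=|E|^{2-2s}|E-E|^{1/2}$, which is already the claimed bound with the extra $M_E$ factor to spare (indeed one can insert $M_E\ge 1$ for free, or use it to absorb constants).

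Alternatively, and perhaps more transparently, I would avoid dyadic bookkeeping entirely by a single interpolation: write
\[
\sum_{t\neq 0} r_{E-E}(t)^2 \;=\; \sum_{t\neq 0} r_{E-E}(t)^{1/2}\cdot r_{E-E}(t)^{3/2} \;\le\; M_E^{3/2}\sum_{t\neq 0} r_{E-E}(t)^{1/2},
\]
but this overshoots; the cleaner route is to split at a threshold $T$ (to be optimized), bounding $\sum_{r\le T}r_{E-E}(t)^2\le T\sum_{r\le T}r_{E-E}(t)\le T|E|^2$ trivially, wait—this also is not quite the target. So I would commit to the dyadic argument above, which is the one that genuinely produces the $|E-E|^{1/2}$ factor. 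The main obstacle, and the only real subtlety, is suppressing the logarithmic loss from summing $O(\log M_E)$ dyadic scales: this is handled precisely because both series (below and above the crossover scale $2^{2j_0}=|E|^{4-4s}/|E-E|$) are geometric and dominated by their endpoint terms, so the total is $\ll |E-E|\cdot 2^{j_0}\asymp|E-E|^{1/2}|E|^{2-2s}$, with no logarithm. Inserting the harmless factor $M_E\ge 1$ then yields the stated inequality $\Lambda_4(E)\ll M_E|E|^{2-2s}|E-E|^{1/2}$, and the diagonal term $|E|^2$ is absorbed since $s\le 1/2$ forces $|E|^2\le|E|^{2-2s}|E|^{2s}\le M_E|E|^{2-2s}|E-E|^{1/2}$ whenever $|E-E|\ge|E|^{4s}/M_E^2$, which itself follows from $\sum_{t\neq0}r_{E-E}(t)=|E|^2-|E|$ together with $r_{E-E}(t)\le M_E$.
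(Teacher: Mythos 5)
Your dyadic argument has a genuine gap, and the paper's proof avoids it by taking a different route: a \emph{single} threshold split, not a dyadic one. Concretely, your bound on the dyadic block $j$ is $\sum_{t\in B_j} r_{E-E}(t)^2 \le 4\cdot 2^{2j}\,|R_{2^j}(E)|$; plugging in the weak $s$-Sidon bound $|R_{2^j}(E)|\le C|E|^{4-4s}2^{-2j}$ gives $\le 4C|E|^{4-4s}$, which is a \emph{constant} contribution per block, not a geometric series. So the tail past the crossover contributes $O(|E|^{4-4s}\log M_E)$, and your claim that ``both series are geometric and dominated by their endpoint terms'' is false. You also miscompute the sum below the crossover: the geometric sum $\sum_{j\le j_0}2^{2j}|E-E|$ is $\asymp 2^{2j_0}|E-E|\asymp|E|^{4-4s}$, not $2^{j_0}|E-E|\asymp|E|^{2-2s}|E-E|^{1/2}$ (you dropped a square on $2^{j_0}$). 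Both issues point in the same direction: a weak-$L^2$ distributional bound does not yield an $L^2$ bound without a logarithm, so the conclusion $\sum_{t\ne 0}r_{E-E}(t)^2\ll|E|^{2-2s}|E-E|^{1/2}$ ``with $M_E$ to spare'' is not just unproved, it is false. A subspace $W$ (which is weak $\tfrac14$-Sidon) has $\Lambda_4(W)=|W|^3$ while $|W|^{2-2s}|W-W|^{1/2}=|W|^{2}$; only the extra factor $M_W=|W|$ rescues the inequality.

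The factor $M_E$ is therefore not ``inserted for free'' but is the essential ingredient, and the paper uses it where you discarded it. The paper splits once at a threshold $n$: on $(E-E)\setminus R_n(E)$ use $r_{E-E}(t)^2\le n^2$ to get $n^2|E-E|$; on $R_n(E)$ use the \emph{pointwise} bound $r_{E-E}(t)^2\le M_E^2$ together with the distributional bound $|R_n(E)|\le C|E|^{4-4s}/n^2$ to get $CM_E^2|E|^{4-4s}/n^2$. Optimizing, $n=M_E^{1/2}|E|^{1-s}|E-E|^{-1/4}$, makes both terms equal to $M_E|E|^{2-2s}|E-E|^{1/2}$, which is the claim. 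Incidentally, you did flirt with a single-threshold split (``bounding $\sum_{r\le T}r_{E-E}(t)^2\le T|E|^2$'') but bounded the low part by $T\sum r_{E-E}(t)\le T|E|^2$ instead of $T^2|(E-E)\setminus R_T|\le T^2|E-E|$, which is why it didn't look like it would close; the latter is the bound you want.
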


Since any $t\in E-E$ can have at most one representation $x-y$ for each $x\in E,$ we get $M_E\leq |E|.$ Therefore we obtain the following corollary.

\begin{corollary}\label{cor:weak-tech}
    If $E$ is weak $s$-Sidon, then
    \[\Lambda_4(E)\ll |E|^{3-2s}|E-E|^\frac{1}{2}.\]
\end{corollary}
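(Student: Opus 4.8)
The plan is to decompose the fourth-energy sum by the richness level of each difference via the layer-cake identity, and then to play the trivial bound $|R_n(E)|\le|E-E|$ off against the weak $s$-Sidon bound $|R_n(E)|\le C|E|^{4-4s}n^{-2}$ from \eqref{eq:weak-s-sidon}.

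First I would write $\Lambda_4(E)=\sum_{t}r_{E-E}(t)^2=|E|^2+\sum_{t\ne 0}r_{E-E}(t)^2$ and rewrite the tail using $r_{E-E}(t)=\bigl|\{n\ge 1:\ t\in R_n(E)\}\bigr|$, which gives
\[
\sum_{t\ne 0}r_{E-E}(t)^2\;=\;\sum_{n\ge 1}\ \sum_{t\in R_n(E)}r_{E-E}(t)\;\le\;M_E\sum_{n\ge 1}|R_n(E)|,
\]
since every $t\in R_n(E)$ is a nonzero difference, so $r_{E-E}(t)\le M_E$. Next, for each $n$ I would use $|R_n(E)|\le\min\bigl(|E-E|,\ C|E|^{4-4s}n^{-2}\bigr)$ and split the sum over $n$ at the crossover $n_0\asymp\bigl(|E|^{4-4s}/|E-E|\bigr)^{1/2}$. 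The range $n\le n_0$ contributes at most $M_E|E-E|\cdot n_0\asymp M_E|E|^{2-2s}|E-E|^{1/2}$, while the range $n>n_0$ contributes at most $M_E\,C|E|^{4-4s}\sum_{n>n_0}n^{-2}\ll M_E|E|^{4-4s}n_0^{-1}\asymp M_E|E|^{2-2s}|E-E|^{1/2}$. Hence $\sum_{t\ne 0}r_{E-E}(t)^2\ll M_E|E|^{2-2s}|E-E|^{1/2}$.

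It then remains to absorb the main term $|E|^2$. For this I would use two elementary facts: from the $n=1$ case of \eqref{eq:weak-s-sidon}, $|E-E|\le 1+|R_1(E)|\ll|E|^{4-4s}$, hence $|E-E|^{1/2}\ll|E|^{2-2s}$; and from $|E|^2-|E|=\sum_{t\ne 0}r_{E-E}(t)\le M_E(|E-E|-1)$ together with $|E|\le|E-E|$, one gets $|E|^2\ll M_E|E-E|$. Combining these, $|E|^2\ll M_E|E-E|=M_E|E-E|^{1/2}\cdot|E-E|^{1/2}\ll M_E|E|^{2-2s}|E-E|^{1/2}$, which matches the tail estimate, giving $\Lambda_4(E)\ll M_E|E|^{2-2s}|E-E|^{1/2}$. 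The degenerate case $|E|\le 1$ is trivial, and when $n_0<1$ one simply uses the weak bound for all $n$, which is of comparable size because then $|E-E|\asymp|E|^{4-4s}$.

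The only delicate point is the bookkeeping of constants in the two-regime split over $n$, in particular verifying that the crossover $n_0$ is $\gtrsim 1$ so that the split is meaningful — this follows once more from $|E-E|\ll|E|^{4-4s}$ — and choosing the convergent comparison $\sum_{n>n_0}n^{-2}\ll n_0^{-1}$; everything else is routine. Corollary~\ref{cor:weak-tech} then follows immediately by inserting the trivial bound $M_E\le|E|$ into Lemma~\ref{lem:weak-tech}.
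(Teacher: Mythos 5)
Your final step --- inserting the trivial bound $M_E\le|E|$ into Lemma~\ref{lem:weak-tech} --- is exactly the paper's one-line proof of this corollary, so the approach matches. The bulk of your argument re-derives Lemma~\ref{lem:weak-tech} itself (via a layer-cake decomposition $\sum_{t\neq 0}r_{E-E}(t)^2\le M_E\sum_{n\ge 1}|R_n(E)|$ followed by splitting the $n$-sum at a crossover, which is a minor repackaging of the paper's single-threshold split and optimizes to the same quantity); that part is correct and in fact more careful than the paper about absorbing the $|E|^2$ term, but it is not needed here since the lemma is already established.
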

 We now prove the lemma.

\begin{proof}
    For any $n\geq 1$, we split the energy into three terms: the zero term, the terms with bounded representation, and the $n$-rich terms.
    \begin{align*}
        \Lambda_4(E) &= |E|^2 + \sum_{t\in (E-E)\setminus R_n(E)}r_{E-E}(t)^2 + \sum_{t\in R_n(E)}r_{E-E}(t)^2\\
        &\ll |E|^2 + n^2 |(E-E)\setminus R_n(E)| + \frac{M_E^2|E|^{4-4s}}{n^2},\\
        &\leq |E|^2 + n^2 |E-E| + \frac{M_E^2|E|^{4-4s}}{n^2},\\
    \end{align*}
    where we used the bound on the number of representations and the fact that $E$ is a weak $s$-Sidon set in the second line. Next, we balance the contributions to the energy estimate by choosing a value
    \[n = \frac{M_E^\frac{1}{2}|E|^{1-s}}{|E-E|^\frac{1}{4}}.\]
    Plugging this in yields the claimed bound.
\end{proof}

We pause to note that the choice of $n$ in the argument above could be improved to lead toward tighter bounds in the case that one knows more about the set under consideration. The calculation above is intended to be a proof of concept. Finally, we show that examples of these sets exist for a range of parameters.

\begin{proposition}\label{prop:exist-s-sidon}
Let $d\ge 1$ and let $s\in \big[\frac{1}{4}, \frac{1}{2}\big]$.
For all sufficiently large $q$ (depending on $d$ and $s$), there exists a set
$E\subset \mathbb{F}_q^{\,d}$ which is strong $s$-Sidon, and hence also weak $s$-Sidon.

More precisely:

\begin{enumerate}
    \item[(i)] If $s\in \big[\frac{1}{4}, \frac{1}{2} \big)$, then one can choose $E$ with
        \[
        |E|\sim q^{\frac{d}{4s}}
        \qquad\text{and}\qquad
        \max_{t\in \mathbb{F}_q^{\,d}\setminus\{0\}} r_{E-E}(t)\ \ll_{d,s}\ |E|^{2-4s}.
        \]
    \item[(ii)]  If $s=\frac{1}{2}$, then there exists a Sidon set
$E\subset \mathbb{F}_q^{\,d}$ of size $|E|=q^{\lfloor \frac{d}{2} \rfloor}$.
In particular, if $d$ is even, one may take $|E|=q^{\frac{d}{2}}$.
\end{enumerate}
\end{proposition}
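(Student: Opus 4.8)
The plan is to construct the examples directly from known Sidon-set constructions, exploiting the size restriction $|E|\le q^{\frac{d}{4s}}$ built into the definition of a strong $s$-Sidon set. First I would handle part (ii), which is the cleaner endpoint case. Here $s=\frac12$ so $q^{\frac{d}{4s}}=q^{\frac d2}$, and the strong $\frac12$-Sidon condition with constant $C=1$ is, by Remark~\ref{rem:s-sidon-range}, exactly the classical Sidon property. When $d$ is even, the set $U$ of Lemma~\ref{lem:U-size-energy} (the image under an $\mathbb{F}_p$-linear isomorphism $\psi\colon\mathbb{F}_{q_0}^2\to\mathbb{F}_q^d$ with $q_0=q^{d/2}$ of the graph $\{(t,t^2):t\in\mathbb{F}_{q_0}\}$) is a Sidon set of size exactly $q^{d/2}$, so it already does the job. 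For odd $d$, I would apply the same construction inside an $(d-1)$-dimensional coordinate subspace $\mathbb{F}_q^{d-1}\times\{0\}\subset\mathbb{F}_q^d$: a Sidon set in a subgroup is Sidon in the ambient group, giving $|E|=q^{\lfloor d/2\rfloor}=q^{(d-1)/2}$, and the size bound $q^{(d-1)/2}\le q^{d/(4\cdot 1/2)}=q^{d/2}$ holds trivially.

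For part (i), fix $s\in\big[\frac14,\frac12\big)$ and set $N:=\lceil q^{\frac{d}{4s}}\rceil$; note that because $s<\frac12$ we have $\frac{d}{4s}>\frac d2$, so $N$ can exceed $q^{d/2}$, and in fact we need a set of size roughly $q^{d/(4s)}$ inside a space of size $q^d$ with every nonzero difference represented at most $O(N^{2-4s})$ times. The key observation is that a \emph{random} $N$-element subset $E\subset\mathbb{F}_q^d$ has, with positive probability, $\max_{t\ne 0}r_{E-E}(t)\ll N^{2-4s}$: for a fixed nonzero $t$ the random variable $r_{E-E}(t)$ counts pairs $(x,x+t)\subset E$, its expectation is about $N^2/q^d\le N^{2-4s}$ (using $N\ll q^{d/(4s)}$ so $q^d\ge N^{4s}$), and a standard concentration / moment bound together with a union bound over the at most $q^d$ values of $t$ yields the desired pointwise control once one checks $q^d\le N^{4s+o(1)}$ makes the union bound affordable; a cleaner route is a direct second-moment computation showing $\mathbb{E}\sum_{t\ne0}r_{E-E}(t)^k$ is small for a suitable fixed $k$, forcing $M_E\ll N^{2-4s}$ on a positive-probability event. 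With such an $E$ fixed we have $|E|\sim q^{d/(4s)}$ and $M_E\ll|E|^{2-4s}$, which is precisely the strong $s$-Sidon condition; the implication "strong $s$-Sidon $\Rightarrow$ weak $s$-Sidon" then follows from the computation in Remark~\ref{rem:s-sidon-salem} (or directly: \eqref{eq:strong-s-sidon} gives $|R_n(E)|\le M_E\sum_t r_{E-E}(t)/n^2\cdot(\text{bound})$, wait — more simply, $r_{E-E}(t)\ge n$ with $r_{E-E}(t)\le M_E$ and $\sum_{t\ne0}r_{E-E}(t)\le|E|^2$ gives $|R_n(E)|\le|E|^2/n^2\le M_E|E|^{4s}/n^2\ll|E|^{4-4s}/n^2$ when $M_E\ll|E|^{2-4s}$ is combined appropriately).

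The main obstacle I anticipate is making the probabilistic argument in part (i) genuinely yield the \emph{pointwise} maximum bound $M_E\ll|E|^{2-4s}$ rather than merely an average or high-moment bound, since a naive union bound over all $q^d$ differences $t$ costs a factor that may overwhelm the tail estimate when $s$ is close to $\frac14$ and $|E|$ is close to $q^d$. I would resolve this either by (a) replacing the random model with an explicit algebraic construction — e.g. a Sidon-type set in a coordinate subspace of the appropriate dimension, then enlarging by a structured union of translates, tracking how the maximum representation grows — or (b) using a sharper large-deviation bound (the representation counts $r_{E-E}(t)$ for a uniformly random $N$-subset are sums of negatively associated indicators, so Chernoff-type bounds apply) so that the union bound over $q^d$ events is affordable precisely in the range $N\ll q^{d/(4s)}$. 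Either way the arithmetic is routine once the right construction is chosen; I would present whichever of (a) or (b) gives the cleanest write-up, most likely the explicit subspace construction for transparency.
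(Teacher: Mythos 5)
Your proposal follows essentially the same route as the paper: part (ii) is the parabola construction $\{(t,t^2)\}$ inside $\mathbb{F}_{q^{\lfloor d/2\rfloor}}^2$ embedded in $\mathbb{F}_q^d$, and part (i) is a uniformly random $N$-element subset with a Chernoff-type/hypergeometric tail bound on $r_{E-E}(t)=|E\cap(E+t)|$ followed by a union bound over $t\ne 0$; the implication ``strong $\Rightarrow$ weak'' is, as you note, the Chebyshev computation from Remark~\ref{rem:s-sidon-salem}. The obstacle you flag in part (i) is in fact not an obstacle, and your hedging towards an explicit construction is unnecessary: with $N\sim q^{d/(4s)}$ the mean $\mu=N^2/q^d\sim N^{2-4s}=q^{d(1/(2s)-1)}$ grows polynomially in $q$ for every $s<\frac12$, so the hypergeometric tail $\exp(-c\mu)$ is super-polynomially small and comfortably beats the $q^d$ terms in the union bound; moreover, the tight regime is $s\to\frac12$ (where the exponent $1/(2s)-1\to 0$, hence the need for $q$ large depending on $s$), not $s\to\frac14$ as you worried — at $s$ near $\frac14$ the mean $\mu$ is enormous and the tail is strongest. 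Committing to your option (b) and running the union bound gives exactly the paper's proof.
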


\begin{proof}
Write $G:=\mathbb{F}_q^{\,d}$ and $Q:=|G|=q^d$.

\smallskip
\noindent
\emph{Part (i).}
Fix an integer $N$ with $N\sim Q^{\frac{1}{4s}}=q^{\frac{d}{4s}}$, and choose $E$ uniformly at random among all $N$-element subsets of $G$.
For each fixed $t\in G\setminus\{0\}$, we have $r_{E-E}(t)=|E\cap(E+t)|$.
This random variable has hypergeometric distribution with mean
\[
\mu:=\mathbb{E}\,r_{E-E}(t)=\frac{N^2}{Q}.
\]
Since $N^{4s}\sim Q$, we have
\[
\mu\sim N^{2-4s}.
\]
Because $s<\frac{1}{2}$, the quantity $\mu$ tends to infinity polynomially in $Q$.

A standard tail bound for hypergeometric random variables yields an absolute constant $c>0$ such that
\[
\mathbb{P}\bigl(r_{E-E}(t)\ge 2\mu\bigr)\le \exp(-c\mu)
\qquad\text{for every }t\ne 0.
\]
Applying the union bound over all $t\in G\setminus\{0\}$ gives
\[
\mathbb{P}\Bigl(\exists\,t\ne 0:\ r_{E-E}(t)\ge 2\mu\Bigr)
\le
(Q-1)\exp(-c\mu).
\]
For $q$ sufficiently large, the right-hand side is strictly smaller than $1$.
Therefore, there exists a set $E$ such that $r_{E-E}(t)<2\mu$ for every $t\ne 0$.
Thus,
\[
\max_{t\ne 0} r_{E-E}(t)\ \ll_{d,s}\ N^{2-4s}.
\]
Since $|E|=N$, this proves (i).

\smallskip
\noindent
\emph{Part (ii).}
Set $m:=\lfloor \frac{d}{2}\rfloor$.
Identify $\mathbb{F}_{q^m}$ with $\mathbb{F}_q^{\,m}$ as an $\mathbb{F}_q$-vector space.
Consider
\[
S:=\{(x,x^2):\ x\in \mathbb{F}_{q^m}\}\subset \mathbb{F}_{q^m}\times \mathbb{F}_{q^m}\cong \mathbb{F}_q^{\,2m}.
\]
The same calculation as in Lemma~\ref{lem:sidon-graph} shows that $S$ is Sidon.
Embedding $\mathbb{F}_q^{\,2m}$ into $\mathbb{F}_q^{\,d}$ by appending $d-2m$ zeros preserves the Sidon property.
Therefore, we obtain a Sidon set in $\mathbb{F}_q^{\,d}$ of size $q^m=q^{\lfloor \frac{d}{2}\rfloor}$.
This proves (ii).

\smallskip
Strong $s$-Sidon implies weak $s$-Sidon by Definition~\ref{def:s-sidon}.
\end{proof}

\subsection{A sum-product application via point-sphere incidences for $(4,s)$-Salem sets}
\label{sec:sum-product-salem}

In this section, we derive a sum-product type consequence of our point-sphere incidence bound.
This follows the incidence philosophy used by Koh and Pham in \cite{Kohpham}, adapted to the
$(4,s)$-Salem setting.


\begin{theorem}
\label{thm:sum-product-salem}
Let $d\ge 2$ and let $s\in\big(\frac{1}{4}, \frac{1}{2} \big]$.
Let $A\subset \mathbb F_q$ be a finite set such that
\begin{equation}\label{eq:SP-energy-A}
\Lambda_4(A)\ \le\ C_0\,|A|^{4-4s}
\qquad\text{and}\qquad
|A|\ \le\ q^{\frac{1}{4s}},
\end{equation}
where $C_0\ge 1$ is independent of $q$.
Define
\[
P:=A^d\subset \mathbb F_q^d,
\qquad
dA^2:=\{a_1^2+a_2^2+\dots+a_d^2:a_j\in A\},
\]
and let $S$ be the family of spheres
\[
S:=\{\sigma_{c,r}:\ c\in (A+A)^d,\ r\in dA^2\},
\qquad
\sigma_{c,r}:=\{x\in\mathbb F_q^d:\ \|x-c\|=r\},
\quad
\|x\|:=x_1^2+\cdots+x_d^2.
\]
Then
\begin{equation}\label{eq:SP-master}
|A|^{2d}
\ \le\
\frac{|A|^d\,|A+A|^d\,|dA^2|}{q}
\ +\
C_1\,q^{\frac{d}{4}}\,|A|^{d(1-s)}\bigl(|A+A|^d\,|dA^2|\bigr)^{\frac{3}{4}},
\end{equation}
where $C_1>0$ depends only on $d$, $s$, and the implied constant in the incidence bound of
Theorem~\ref{thm1}.
In particular, at least one of the following holds:
\begin{align}
|A+A|^d\,|dA^2| &\ \ge\ c\,q\,|A|^d, \label{eq:SP-case1}\\
|A+A|^d\,|dA^2| &\ \ge\ c\,q^{-\frac{d}{3}}\,|A|^{\frac{4d}{3}(1+s)}, \label{eq:SP-case2}
\end{align}
where $c>0$ depends only on $d$, $s$, and $C_0$.
\end{theorem}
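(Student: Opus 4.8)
The plan is to set up a point--sphere incidence problem whose incidence count is bounded below by $|A|^{2d}$ purely by the way $S$ is built, compare this with the upper bound of Theorem~\ref{thm1} applied to the Cartesian power $P=A^d$, and read off \eqref{eq:SP-master}; the dichotomy \eqref{eq:SP-case1}--\eqref{eq:SP-case2} then follows from a pigeonhole split of the two right-hand terms of \eqref{eq:SP-master}.

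First I would verify that $P=A^d$ satisfies the hypotheses of Theorem~\ref{thm1}. The key (and essentially the only) structural input is that fourth additive energy is multiplicative under Cartesian products: an additive quadruple of $A^d\subset\mathbb{F}_q^d$ decouples coordinatewise into $d$ additive quadruples of $A$, so $\Lambda_4(P)=\Lambda_4(A)^d$. Hence \eqref{eq:SP-energy-A} gives $\Lambda_4(P)\le C_0^{\,d}\,|P|^{4-4s}$, so $P$ is a $(4,s)$-Salem set, and $|P|=|A|^d\le q^{d/(4s)}$ by the size hypothesis in \eqref{eq:SP-energy-A}, so the required size restriction also holds. Next I would record the lower bound: for each $(x,a)\in A^d\times A^d$ set $c:=x+a\in(A+A)^d$ and $r:=\|a\|=a_1^2+\cdots+a_d^2\in dA^2$; then $\|x-c\|=\|a\|=r$, so $x$ is incident to $\sigma_{c,r}\in S$, and since $a=c-x$ distinct pairs $(x,a)$ yield distinct triples $(x,c,r)$. (To keep this count literal it is cleanest to work through the lifting reduction of Section~\ref{section2}, where $\sigma_{c,r}$ becomes the hyperplane $H_{c,r}\subset\mathbb{F}_q^{d+1}$: since $H_{c,r}$ determines $(c,r)$ from its normal vector and constant term, the associated hyperplane family has exactly $|A+A|^d\,|dA^2|$ elements and the construction produces at least $|A|^{2d}$ point--hyperplane incidences.) Thus $I(P,S)\ge|A|^{2d}$ while $|S|\le|A+A|^d\,|dA^2|$, and since both terms in the bound of Theorem~\ref{thm1} increase with $|S|$, substituting gives
\[
|A|^{2d}\ \le\ \frac{|A|^d\,|A+A|^d\,|dA^2|}{q}\ +\ C_1\,q^{\frac{d}{4}}\,|A|^{d(1-s)}\bigl(|A+A|^d\,|dA^2|\bigr)^{\frac{3}{4}},
\]
with $C_1$ depending only on $d$, $s$, and $C_0$ (it absorbs the factor $C_0^{\,d}$ and the implied constant of Theorem~\ref{thm1}); this is \eqref{eq:SP-master}.

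For the dichotomy I would abbreviate $X:=|A+A|^d\,|dA^2|$, so that \eqref{eq:SP-master} reads $|A|^{2d}\le q^{-1}|A|^dX+C_1q^{d/4}|A|^{d(1-s)}X^{3/4}$; one of the two right-hand terms is at least $\tfrac12|A|^{2d}$. If it is the first, then $X\ge\tfrac12\,q\,|A|^d$, which is \eqref{eq:SP-case1}; if it is the second, then $X^{3/4}\ge(2C_1)^{-1}q^{-d/4}|A|^{d(1+s)}$, hence $X\ge(2C_1)^{-4/3}q^{-d/3}|A|^{\frac{4d}{3}(1+s)}$, which is \eqref{eq:SP-case2}. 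I do not anticipate a serious obstacle: the one genuinely structural ingredient is the tensor-power stability of the $(4,s)$-Salem property used in the second paragraph, and the single point that requires a moment's care is keeping the lower-bound incidence count honest in case several parameters $(c,r)$ describe the same sphere --- handled, as indicated, by passing to the lifted hyperplane picture, where $H_{c,r}$ determines $(c,r)$; the rest is a direct application of Theorem~\ref{thm1} and elementary arithmetic.
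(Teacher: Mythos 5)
Your proposal is correct and follows essentially the same route as the paper's proof: verify $\Lambda_4(A^d)=\Lambda_4(A)^d$ so that $P=A^d$ inherits the $(4,s)$-Salem property and satisfies the size hypothesis of Theorem~\ref{thm1}, lower-bound $I(P,S)$ by $|A|^{2d}$ via the pairs $(x,a)\mapsto\sigma_{x+a,\|a\|}$, and compare with the incidence upper bound before splitting by pigeonhole. Your extra care in passing to the lifted hyperplane picture to make the $|A|^{2d}$ lower bound literal (so that distinct $(x,a)$ yield genuinely distinct incidences) is a small tightening of a step the paper states without comment, but it does not change the argument.
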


\begin{proof}
Recall that $I(P,S)$ denotes the number of incidences between $P$ and $S$.

\smallskip
\noindent

For each ordered pair $(x,u)\in A^d\times A^d$, set
\[
c:=x+u\in (A+A)^d,
\qquad
r:=\|u\|=\sum_{j=1}^d u_j^2\in dA^2.
\]
Then $x-(x+u)=-u$, and hence $\|x-c\|=\|u\|=r$, so $x\in\sigma_{c,r}$.
Thus every pair $(x,u)$ contributes an incidence, and therefore
\begin{equation}\label{eq:SP-lower}
I(P,S)\ \ge\ |A^d|\,|A^d|\ =\ |A|^{2d}.
\end{equation}

\smallskip
\noindent

Let $P:=A^d$.
A quadruple $(x_1,x_2,x_3,x_4)\in P^4$ satisfies $x_1+x_2=x_3+x_4$ if and only if this holds in each coordinate.
Therefore
\[
\Lambda_4(P)=\Lambda_4(A)^d\le C_0^d\,|A|^{d(4-4s)}=C_0^d\,|P|^{4-4s}.
\]
Moreover, the size condition $|A|\le q^{\frac{1}{4s}}$ implies $|P|=|A|^d\le q^{ \frac{d}{4s}}$, hence
$|P|^{4s}\le q^d$ and so
\[
\frac{|P|^4}{q^d}\ \le\ |P|^{4-4s}.
\]
Consequently,
\[
\Lambda_4(P)\ \ll\ |P|^{4-4s}+\frac{|P|^4}{q^d},
\]
so $P$ is a $(4,s)$-Salem set in $\mathbb F_q^d$ in the sense of Definition~\ref{(4,s)-Salem-set}.

\smallskip
\noindent

Since $|P|=|A|^d\le q^{\frac{d}{4s}}$, we may apply Theorem~\ref{thm1} to $P$ and $S$ and obtain
\[
\left|I(P,S)-\frac{|P|\,|S|}{q}\right|
\ \ll\
q^{\frac{d}{4}}\,|P|^{1-s}\,|S|^{\frac{3}{4}}.
\]
Thus
\begin{equation}\label{eq:SP-upper}
I(P,S)
\ \le\
\frac{|P|\,|S|}{q}
\ +\
C_1\,q^{\frac{d}{4}}\,|P|^{1-s}\,|S|^{\frac{3}{4}}.
\end{equation}
Since $|P|=|A|^d$ and $|S|=|A+A|^d\,|dA^2|$, inequality \eqref{eq:SP-upper} becomes
\[
I(P,S)
\ \le\
\frac{|A|^d\,|A+A|^d\,|dA^2|}{q}
\ +\
C_1\,q^{\frac{d}{4}}\,|A|^{d(1-s)}\bigl(|A+A|^d\,|dA^2|\bigr)^{\frac{3}{4}}.
\]
Combining this with the lower bound \eqref{eq:SP-lower} gives \eqref{eq:SP-master}.

\smallskip
Finally, \eqref{eq:SP-case1} and \eqref{eq:SP-case2} follow from \eqref{eq:SP-master} by comparing the two terms
on the right-hand side with the left-hand side.
\end{proof}

A direct computation shows that, under the hypotheses $\Lambda_4(A)\ll |A|^{4-4s}$ and $|A|\le q^{\frac{1}{4s}}$, our incidence argument yields
\[
M:=\max\{|A+A|,\ |dA^2|\}
\ \gg\
\min\Bigl\{q^{\frac1{d+1}}|A|^{\frac{d}{d+1}},\ q^{-\frac{d}{3(d+1)}}|A|^{\frac{4(1+s)d}{3(d+1)}}\Bigr\}.
\]
The two terms in the minimum are equal exactly when $|A|=q^{\alpha_0}$, where
\[
\alpha_0=\frac{d+3}{d(1+4s)}.
\]
Hence, for $|A|\ge q^{\alpha_0}$ we have the lower bound
\[
M\ \gg\ q^{\frac1{d+1}}|A|^{\frac{d}{d+1}}.
\]
Moreover, one checks that
\[
q^{\frac1{d+1}}|A|^{\frac{d}{d+1}}
\ \gg\
\frac{|A|^d}{q^{\frac{d-1}{2}}}
\qquad\Longleftrightarrow\qquad
|A|\ \ll\ q^{\frac{d^2+1}{2d^2}},
\]
so in the range $q^{\max\{\frac12,\alpha_0\}}\ll |A|\ll q^{\frac{d^2+1}{2d^2}}$ our bound improves the Koh-Pham estimate
$\max\{|A+A|,|dA^2|\}\gg |A|^d/q^{ \frac{d-1}{2}}$ from \cite[Corollary~9]{Kohpham}.
Since $q^{\frac1{d+1}}|A|^{\frac{d}{d+1}}
\gg
|A|^{\frac{3d-5}{d-1}}q^{\frac{2-d}{d-1}}$ throughout this same range, it also improves the bound
$|A|^{\frac{3d-5}{d-1}}q^{\frac{2-d}{d-1}}$ in \cite{Hpham}.
Finally,
\[
q^{\frac1{d+1}}|A|^{\frac{d}{d+1}}
\ \gg\
|A|^{4s}
\qquad\Longleftrightarrow\qquad
|A|\ \ll\ q^{\frac{1}{4s(d+1)-d}},
\]
so, whenever $s>\frac{d}{4(d+1)}$ (in particular for all $s\in(\frac14,\frac12]$ and $d\ge 3$),
our bound also improves the trivial Salem consequence $|A+A|\gg |A|^{4s}$ in the range
$q^{\max \big\{\frac12,\alpha_0 \big\}}\ll |A|\ll q^{\min \big\{\frac{d^2+1}{2d^2},\,\frac{1}{4s(d+1)-d} \big\}}$.

When $\mathbb{F}_q$ is a prime field, stronger bounds are available. A key reference in this direction is the work of Pham, Vinh, and de Zeeuw in \cite{PVD}.


\subsection{On the Koh-Lee-Pham framework via cone extension estimates}
\label{subsec:KLP-framework}

In this subsection we record several observations related to the approach of Koh, Lee, and Pham in \cite{KLP22}. It seems plausible that, in the Salem setting, extension-theoretic ideas could lead to new incidence bounds. We hope to return to this question in future work.

\subsubsection*{Weighted incidences and normalization}

Let $P\subset \mathbb{F}_q^d$ be a set of points and let $S$ be a finite family of spheres in $\mathbb{F}_q^d$.
Given a weight function $w:S\to \mathbb{C}$, define the weighted incidence number
\[
I_w(P,S)
:=
\sum_{p\in P}\sum_{\sigma\in S} w(\sigma)\,{\bf 1}_{\sigma}(p).
\]
For $1\le p\le \infty$, we write
\[
\|w\|_{\ell^p(S)}
:=
\Bigl(\sum_{\sigma\in S}|w(\sigma)|^p\Bigr)^{1/p},
\qquad
\|w\|_{\ell^\infty(S)}:=\max_{\sigma\in S}|w(\sigma)|.
\]

\subsubsection*{The restriction-driven $\ell^p$ bound}

The Koh-Lee-Pham framework starts from their delicate resolution of the endpoint
extension estimate for the cone in dimension $d+2$, where $d\equiv 2\mod 4$ and $q\equiv 3\mod 4$.

Assume that an extension estimate of the form
\begin{equation}\label{eq:cone-endpoint}
R^*_{C_{d+2}}(2\to r)\ \ll\ 1,
\qquad
r=\frac{2(d+2)+4}{d+2}=\frac{2d+8}{d+2},
\end{equation}
is available.
Let $r'$ be the conjugate exponent of $r$, so that
\[
r'=\frac{r}{r-1}=\frac{2d+8}{d+6}.
\]
Then the lifting argument from spheres in $\mathbb{F}_q^d$ to the cone in $\mathbb{F}_q^{d+2}$, together with duality, yields an incidence bound of the following form:
\begin{equation}\label{eq:KLP-blackbox}
\Bigl|I_w(P,S)-q^{-1}|P|\sum_{\sigma\in S} w(\sigma)\Bigr|
\ \ll\
q^{\frac{d^2+3d-2}{2d+8}}\,
|P|^{\frac{1}{2}}\,
\|w\|_{\ell^{r'}(S)}.
\end{equation}
Equivalently,
\[
\|w\|_{\ell^{r'}(S)}
=
\Bigl(\sum_{\sigma\in S}|w(\sigma)|^{\frac{2d+8}{d+6}}\Bigr)^{\frac{d+6}{2d+8}}.
\]
The exponent
\[
\alpha_d:=\frac{d+6}{2d+8}=\frac{1}{r'}
\]
is therefore dictated by the endpoint extension exponent $r$ through conjugacy.
In particular, \eqref{eq:KLP-blackbox} is the canonical incidence estimate that one obtains from \eqref{eq:cone-endpoint} without using any additional structure of the family $S$. Thus, restriction theory provides a natural template for converting extension estimates into
incidence bounds.

\subsubsection*{A sharper $\ell^2$ bound in a sparse regime}
In several arithmetic regimes, Koh, Lee, and Pham showed that, for sufficiently small families of spheres,
the $\ell^{r'}$-norm in \eqref{eq:KLP-blackbox} can be replaced by an $\ell^2$-norm, at the cost of imposing a size restriction on $S$.

More precisely, under the corresponding hypotheses on $(d,q)$ and assuming that $|S|$ is below a dimension-dependent threshold, one has
\begin{equation}\label{eq:KLP-L2}
\Bigl|I_w(P,S)-q^{-1}|P|\sum_{\sigma\in S} w(\sigma)\Bigr|
\ \ll \
q^{\frac{d-1}{2}}\,
|P|^{\frac{1}{2}}\,
\|w\|_{\ell^2(S)},
\end{equation}
which is Theorem \ref{Thm-incidence-Q-spheres} stated in the introduction. 

Estimate \eqref{eq:KLP-L2} is stronger than \eqref{eq:KLP-blackbox} whenever it applies, since $\|w\|_{\ell^2}$
is smaller than $\|w\|_{\ell^{r'}}$ for $r'<2$.
The point is that \eqref{eq:KLP-L2} uses additional input beyond \eqref{eq:cone-endpoint}, namely a sparsity
assumption on the support of $w$ and the specific structure coming from families of spheres.

Consider the unweighted case $w\equiv 1$, then \eqref{eq:KLP-blackbox} gives
\[
\text{error}\ \ll\ q^{\frac{d^2+3d-2}{2d+8}}\ |P|^{\frac{1}{2}}\ |S|^{\alpha_d},
\qquad
\alpha_d=\frac{d+6}{2d+8}.
\]
On the other hand, the well-known point-sphere incidence bound for arbitrary sets (Theorem \ref{Thm-incidence-spheres}) gives
\[
\text{error}\ \ll\ q^{\frac{d}{2}}\ |P|^{\frac{1}{2}}\ |S|^{\frac{1}{2}}.
\]
A direct comparison shows that the restriction bound is strictly stronger precisely up to the threshold
\begin{equation*}\label{eq:KLP-threshold}
|S|\ \ll\ q^{\frac{d+2}{2}}.
\end{equation*}
Hence, \eqref{eq:KLP-blackbox} yields a genuine gain over the bounds of both Theorems \ref{Thm-incidence-spheres} and \ref{Thm-incidence-Q-spheres} in the intermediate range
\[
q^{\frac{d}{2}}\ \ll\ |S|\ \ll\ q^{\frac{d+2}{2}}.
\]

{\bf Acknowledgements} 
The authors would like to thank Thang Pham for his valuable comments, which have improved the quality of this paper. We also would like to thank Kevin O'Bryant for helpful discussions about Sidon sets. Moreover, we are grateful for the anonymous referees, who read this carefully, and provided thoughtful comments for us, which also improved the accuracy and clarity of this paper. Dung The Tran was supported by the research project QG.25.02 of Vietnam National University, Hanoi. He also would like to thank Vietnam Institute for Advanced Study in Mathematics (VIASM) for the hospitality and for the excellent working conditions.

\end{document}